\newtheorem{theorem}{Theorem}
\newtheorem{lemma}[theorem]{Lemma}
\theoremstyle{definition}
\theoremstyle{remark}
\newtheorem{rem}{Remark}
\numberwithin{equation}{section}
\numberwithin{theorem}{section}
\numberwithin{defn}{section}
\begin{document}
\title[Some New Modular Rank Three Nahm Sums]
 {Some New Modular Rank Three Nahm Sums from a Lift-Dual Operation}

\author{Zhineng Cao and Liuquan Wang}

\address[Z.\ Cao]{School of Mathematics and Statistics, Wuhan University, Wuhan 430072, Hubei, People's Republic of China}
\email{zhncao@whu.edu.cn}

\address[L.\ Wang]{School of Mathematics and Statistics, Wuhan University, Wuhan 430072, Hubei, People's Republic of China}

\email{wanglq@whu.edu.cn;mathlqwang@163.com}

\subjclass[2010]{11P84, 33D15, 33D60, 11F03}

\keywords{Nahm sums; Rogers--Ramanujan type identities; Bailey pairs; modular triples}


\begin{abstract}
Around 2007, Zagier discovered some rank two and rank three Nahm sums, and their modularity have now all been confirmed. Zagier also observed that the dual of a modular Nahm sum is likely to be modular.  This duality observation motivates us to discover some new modular rank three Nahm sums by a lift-dual operation. We first lift Zagier's rank two Nahm sums to rank three and then calculate their dual, and we show that these dual Nahm sums are indeed modular. We achieve this by establishing the corresponding Rogers--Ramanujan type identities, which express these Nahm sums as modular infinite products.
\end{abstract}

\maketitle

\section{Introduction}

As an important problem linking the theory of $q$-series and modular forms, Nahm's problem is to determine all positive definite matrix $A\in \mathbb{Q}^{r\times r}$, $r$-dimensional column vector $B\in \mathbb{Q}^r$ and rational scalar $C$ such that the Nahm sum
    \begin{align}\label{eq-Nahm}
    f_{A,B,C}(q):=\sum_{n=(n_1,\dots,n_r)^\mathrm{T} \in \mathbb{N}^r}\frac{q^{\frac{1}{2}n^\mathrm{T}An+n^\mathrm{T}B+C}}{(q;q)_{n_1} \cdots (q;q)_{n_r}}
\end{align}
is modular. Here and below we use $q$-series notations: for $n\in \mathbb{N}\cup \{\infty\}$ we define
\begin{align}
(a;q)_n&:=\prod_{k=0}^{n-1} (1-aq^k), \\
(a_1,\dots,a_m;q)_n&:=(a_1;q)_n\cdots (a_m;q)_n.
\end{align}
Modular Nahm sums usually appear as characters of some rational conformal field theories. A famous example arises from the Rogers--Ramanujan identities \cite{Rogers}:
\begin{align}
    \sum_{n=0}^\infty\frac{q^{n^2}}{(q;q)_n}
    =
    \frac{1}{(q,q^4;q^5)_\infty}, \quad
    \sum_{n=0}^\infty\frac{q^{n^2+n}}{(q;q)_n}
    =
    \frac{1}{(q^2,q^3;q^5)_\infty}.\label{RR}
\end{align}
They imply that the Nahm sums $f_{2,0,-1/60}(q)$ and $f_{2,1,11/60}(q)$ are modular, and they correspond to two characters of the Lee--Yang model (see e.g.\ \cite{Kac}).

For convenience, when the Nahm sum $f_{A,B,C}(q)$ is modular, we call $(A,B,C)$ as a modular triple. Nahm's conjecture, sated explicitly by Zagier \cite{Zagier}, provides a criterion on the matrix $A$ so that it becomes the matrix part of a modular triple. This conjecture has been confirmed in the rank one case by Zagier \cite{Zagier}. It does not hold for a general rank since Vlasenko and Zwegers \cite{VZ} found that the matrices
\begin{align}\label{matrix-VZ}
    A= \begin{pmatrix}
        3/4 & -1/4 \\ -1/4 & 3/4
    \end{pmatrix}, \begin{pmatrix}
        3/2 & 1/2 \\ 1/2 & 3/2
    \end{pmatrix}
\end{align}
do not satisfy Nahm's criterion but do appear as the matrix part of some modular triples.
Recently, Calegari, Garoufalidis and Zagier \cite{CGZ} proved that one direction of Nahm's conjecture is true.

When the rank $r\geq 2$, Nahm's problem is far from being solved. One way to tackle this problem is to provide as many modular triples as possible, and the problem is solved when the list of modular triples is complete.  In the rank two and three cases, after an extensive search, Zagier \cite[Table 2]{Zagier} provided 11 and 12 sets of possible modular Nahm sums, respectively. Their modularity have now all been confirmed by the works of Vlasenko--Zwegers \cite{VZ}, Cherednik--Feigin \cite{Feigin}, Cao--Rosengren--Wang \cite{CRW} and Wang \cite{Wang-rank2,Wang-rank3}.

Zagier \cite[p.\ 50, (f)]{Zagier} observed that there might exist some dual structure among modular triples. For a modular triple $(A,B,C)$, we define its dual as the image of the operator:
\begin{align}
    \mathcal{D}:(A,B,C)\longmapsto (A^\star, B^\star, C^\star)=(A^{-1},A^{-1}B,\frac{1}{2}B^\mathrm{T} A^{-1}B-\frac{r}{24}-C).
\end{align}
Zagier conjectured that $\mathcal{D}(A,B,C)$ is still a modular triple. Recently, Wang \cite{Wang2024} presented some counterexamples involving rank four Nahm sums of this conjecture.

This work aims to provide more modular Nahm sums. The idea of constructing new modular triples consists of two steps. We first lift some known rank two modular triples to rank three, and then we consider their duals.  For any
\begin{align*}
&A=\begin{pmatrix} a_1 & a_2 \\ a_2 & a_3\end{pmatrix}, \quad B=\begin{pmatrix} b_1 \\ b_2 \end{pmatrix},
\end{align*}
we define an operator which we call as \emph{lifting operator} to lift $A$ to a $3\times 3$ matrix and $B$ to a three dimensional vector and keep the value of $C$:
\begin{align}
    \mathcal{L}: (A,B,C)\longmapsto (\widetilde{A},\widetilde{B},C)
\end{align}
where
\begin{align*}
&\widetilde{A}=\begin{pmatrix} a_1 & a_2+1 & a_1+a_2 \\ a_2+1 & a_3 & a_2+a_3 \\ a_1+a_2 & a_2+a_3 & a_1+2a_2+a_3 \end{pmatrix}, \quad \widetilde{B}=\begin{pmatrix} b_1 \\ b_2 \\ b_1+b_2\end{pmatrix}.
\end{align*}
It is known that
\begin{align}\label{eq-lift-id}
    f_{A,B,0}(q)=f_{\widetilde{A},\widetilde{B},0}(q).
\end{align}
This fact appeared first in Zwegers' unpublished work \cite{ZwegersTalk} according to Lee's thesis \cite{LeeThesis}. See \cite{LeeThesis} and \cite{CRW} for a proof.

If $(A,B,C)$ is a rank two modular triple, then from \eqref{eq-lift-id} we get a rank three  modular triple $(\widetilde{A},\widetilde{B}, C)$ for free subject to the condition that $\widetilde{A}$ is positive definite. Zagier's duality conjecture then motivates us to consider the dual example of $(\widetilde{A},\widetilde{B},C)$. That is, from a rank two modular triple $(A,B,C)$ we get a candidate of rank three modular triple $\mathcal{D}\mathcal{L}(A,B,C)$ when $\widetilde{A}$ is positive definite.  We shall call this process \emph{lift-dual operation} to $(A,B,C)$.

It should be noted that the lift-dual process does not always generate new modular triples. For example, the two matrices in  \eqref{matrix-VZ} lift to singular matrices. Therefore, they do not generate new rank three modular triples from the lift-dual operation.

The main object of this work is to apply the lifting operator to Zagier's rank two examples \cite[Table 2]{Zagier} and check if we get new modular triples. We list the lifting matrices in Table \ref{tab-lift}. It is easy to see that only four of them are positive definite. Namely,  the lift of Zagier's matrices are positive definite only for Examples 1, 3, 9 and 11.
\begin{table}[htbp]\label{tab-lift}
\renewcommand{\arraystretch}{1.9}
\begin{tabular}{cccc} \hline
  Exam.\ No.\ & Matrix $A$ & Lift $\widetilde{A}$  & $\det \widetilde{A}$   \\
  \hline
1 & $\left(\begin{smallmatrix}
a & 1-a \\ 1-a & a
\end{smallmatrix} \right)$  &  $\left(\begin{smallmatrix}
     a & 2-a & 1 \\
     2-a & a & 1 \\
     1 & 1 & 2
 \end{smallmatrix} \right)$ & $4a-4$  \\
 \hline
 2 & $\left(\begin{smallmatrix} 2 & 1 \\ 1 & 1 \end{smallmatrix}\right)$ &  $\left(\begin{smallmatrix} 2 & 2 & 3  \\ 2 & 1 & 2 \\ 3 & 2 & 5 \end{smallmatrix}\right)$ & $-3$ \\
 \hline
 3 & $\left(\begin{smallmatrix} 1 & -1 \\ -1 & 2 \end{smallmatrix}\right)$  & $\left(\begin{smallmatrix} 1 & 0 & 0  \\ 0 & 2  & 1 \\ 0 & 1 & 1 \end{smallmatrix}\right)$  & 1 \\
 \hline
 4 & $\left(\begin{smallmatrix} 4 & 1 \\ 1 & 1 \end{smallmatrix}\right)$ &  $\left(\begin{smallmatrix} 4 & 2 & 5 \\ 2 & 1
 & 2 \\ 5 & 2 & 7\end{smallmatrix}\right)$  & $-1$\\
 \hline
 5 & $\left(\begin{smallmatrix}   1/3 & -1/3 \\ -1/3 & 4/3 \end{smallmatrix} \right)$  & $\left(\begin{smallmatrix} 1/3 & 2/3 & 0  \\ 2/3 & 4/3  & 1 \\ 0 & 1 & 1 \end{smallmatrix}\right)$ & $-1/3$ \\
 \hline
 6 & $\left(\begin{smallmatrix} 4 & 2 \\ 2 & 2 \end{smallmatrix}\right)$ & $\left(\begin{smallmatrix} 4 & 3 & 6 \\ 3 & 2 & 4 \\ 6 & 4 & 10 \end{smallmatrix}\right)$ & $-2$  \\
 \hline
 7 & $\left(\begin{smallmatrix} 1/2 & -1/2 \\ -1/2 & 1 \end{smallmatrix}  \right)$ & $\left(\begin{smallmatrix} 1/2 & 1/2  & 0 \\ 1/2 & 1 & 1/2 \\ 0 & 1/2 & 1/2 \end{smallmatrix}\right)$ & 0 \\
 \hline
 8 & $\left( \begin{smallmatrix} 3/2 & 1 \\ 1 & 2 \end{smallmatrix}  \right)$  & $\left(\begin{smallmatrix} 3/2 & 2 &5/2 \\ 2 & 2  & 3 \\ 5/2 & 3 & 11/2 \end{smallmatrix}\right)$ & $-3/2$ \\
 \hline
 9 & $\left( \begin{smallmatrix} 1 & -1/2 \\ -1/2 & 3/4 \end{smallmatrix}   \right)$  & $\left(\begin{smallmatrix} 1& 1/2 & 1/2  \\ 1/2 & 3/4 & 1/4 \\  1/2 & 1/4 & 3/4 \end{smallmatrix}\right)$ & $1/4$ \\
 \hline
 10 & $\left(\begin{smallmatrix} 4/3 & 2/3 \\ 2/3 & 4/3 \end{smallmatrix}\right)$ & $\left(\begin{smallmatrix} 4/3 & 5/3 & 2 \\ 5/3 & 4/3 & 2 \\ 2 & 2 & 4 \end{smallmatrix}\right)$ & $-4/3$  \\
 \hline
 11 & $\left(\begin{smallmatrix} 1 &-1/2\\ -1/2 & 1 \end{smallmatrix}\right)$  & $\left(\begin{smallmatrix} 1 & 1/2 & 1/2 \\ 1/2  & 1 & 1/2 \\ 1/2 & 1/2 &1  \end{smallmatrix}\right)$ & $1/2$ \\
  \hline
\end{tabular}
\\[2mm]
\caption{Matrices from Zagier's rank two examples and their lifts}
\label{tab-known}
\end{table}

The dual of the lift of the matrix in Example 3 is
\begin{align}
    \widetilde{A}^\star=\begin{pmatrix}
        1 & 0 & 0 \\
        0 & 1 & -1 \\
        0 & -1 & 2
    \end{pmatrix}.
\end{align}
Obviously, any Nahm sum for this matrix can be decomposed into the product of a rank one Nahm sum and a rank two Nahm sum, and hence is not essentially new. Therefore, we will focus on the dual of the lift of Examples 1, 9 and 11. We find that they indeed produce new modular Nahm sums.

For each of the Nahm sums we consider, we investigate their modularity by establishing  the corresponding Rogers--Ramanujan type identities. To be precise, we express the Nahm sums using the functions
\begin{align}\label{Jm}
J_m:=(q^m;q^m)_\infty \quad \text{and} \quad J_{a,m}:=(q^a,q^{m-a},q^m;q^m)_\infty.
\end{align}
Let $q=e^{2\pi i \tau}$ where $\mathrm{Im}~ \tau>0$. The functions $J_m$ and $J_{a,m}$ are closely related to the Dedekind eta function
\begin{align}\label{eta-defn}
    \eta(\tau):=q^{1/24}(q;q)_\infty
\end{align}
and the generalized Dedekind eta function
\begin{align}\label{general-eta}
    \eta_{m,a}(\tau):=q^{mB(a/m)/2}(q^a,q^{m-a};q^m)_\infty
\end{align}
where $B(x)=x^2-x+1/6$. It is well-known that $\eta(\tau)$ is a modular form of weight $1/2$ and $\eta_{m,a}(\tau)$ is a modular form of weight zero. The modularity of a  Nahm sum will be clear once we write it in terms of $J_m$ and $J_{a,m}$.

We shall use an example to briefly illustrate our work. Zagier's Example 11 asserts that $(A,B_i,C_i)$ are modular triples where
\begin{equation}
\begin{split}
&A=\begin{pmatrix}
    1 & -1/2 \\  -1/2 & 1
\end{pmatrix}, ~~ B_1=\begin{pmatrix}
-1/2 \\ 0 \end{pmatrix}, ~~ B_2=\begin{pmatrix}
    0 \\ -1/2
\end{pmatrix}, ~~ B_3=\begin{pmatrix}
    0 \\ 0
\end{pmatrix}, \\
&C_1=1/20, \quad C_2=1/20, \quad C_3=-1/20.
\end{split}
\end{equation}
This lifts to the  modular triples $(\widetilde{A},\widetilde{B},C_i)$ where $C_i$ is as above and
\begin{align}
\widetilde{A}=\begin{pmatrix} 1 & 1/2 & 1/2 \\  1/2 & 1 & 1/2 \\ 1/2 & 1/2 & 1 \end{pmatrix}, ~~\widetilde{B}_1 =
\begin{pmatrix} -1/2 \\ 0 \\ -1/2 \end{pmatrix},  ~~\widetilde{B}_2= \begin{pmatrix} 0 \\ -1/2 \\ -1/2 \end{pmatrix}, ~~ \widetilde{B}_3=\begin{pmatrix} 0 \\ 0 \\ 0 \end{pmatrix}.
\end{align}
We may also include the vector $(-1/2,-1/2,0)^\mathrm{T}$ since $n_1,n_2,n_3$ are symmetric to each other in the quadratic form $\frac{1}{2}n^\mathrm{T}\widetilde{A}n$. Considering its dual, we expect that $(\widetilde{A}^\star,\widetilde{B}_i^\star,C_i^\star)$ ($i=1,2,3$) are modular triples where
\begin{align}
&\widetilde{A}^\star=\begin{pmatrix} 3/2 & -1/2 & -1/2 \\  -1/2 & 3/2 & -1/2 \\ -1/2 & -1/2 & 3/2 \end{pmatrix}, ~~ \widetilde{B}_1^\star=
\begin{pmatrix} -1/2 \\ 1/2 \\ -1/2 \end{pmatrix}, ~~\widetilde{B}_2^\star=
\begin{pmatrix} 1/2 \\ -1/2 \\ -1/2 \end{pmatrix},  ~~\widetilde{B}_3^\star=\begin{pmatrix} 0 \\ 0 \\ 0 \end{pmatrix}, \nonumber  \\
&C_1^\star=-3/40, \quad C_2^\star=-3/40, \quad C_3^\star=3/40.
\end{align}
We can also include the vector $(-1/2,-1/2,1/2)^\mathrm{T}$. Due to the symmetry of the quadratic form generated by $\widetilde{A}^\star$, there are essentially only two different Nahm sums to consider. We establish the following identities to confirm their modularity.
\begin{theorem}\label{thm-lift-11}
We have
\begin{align}
  & \sum_{i,j,k\geq 0} \frac{q^{3i^2+3j^2+3k^2-2ij-2ik-2jk}}{(q^4;q^4)_i(q^4;q^4)_j(q^4;q^4)_k}=\frac{J_6^5J_{28,60}}{J_3^2J_4^2J_{12}^2}+2q^3\frac{J_2^2J_3J_{12}J_{12,60}}{J_1J_4^3J_6}-q^4\frac{J_6^5J_{8,60}}{J_3^2J_4^2J_{12}^2}, \label{eq-thm-11-1} \\
  & \sum_{i,j,k\geq 0} \frac{q^{3i^2+3j^2+3k^2-2ij-2ik-2jk-2i+2j-2k}}{(q^4;q^4)_i(q^4;q^4)_j(q^4;q^4)_k}=2\frac{J_2^2J_3J_{12}J_{24,60}}{J_1J_4^3J_6}+q\frac{J_6^5J_{16,60}}{J_3^2J_4^2J_{12}^2}+q^5\frac{J_6^5J_{4,60}}{J_3^2J_4^2J_{12}^2}. \label{eq-thm-11-2}
\end{align}
\end{theorem}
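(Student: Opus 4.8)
The plan is to prove the two $q$-series identities \eqref{eq-thm-11-1} and \eqref{eq-thm-11-2} directly; since each right-hand side is, up to a rational power of $q$, a quotient of the functions in \eqref{Jm}, the modularity of the three Nahm sums $f_{\widetilde{A}^\star,\widetilde{B}_i^\star,C_i^\star}$ ($i=1,2,3$) then follows at once. The key structural observation is that $\widetilde{A}^\star=2I_3-\tfrac12 J_3$, where $J_3$ is the all-ones matrix; hence, with $n=(i,j,k)^\mathrm{T}$ and $s=i+j+k$, one has $\tfrac12 n^\mathrm{T}\widetilde{A}^\star n=i^2+j^2+k^2-\tfrac14 s^2$, and after $q\mapsto q^4$ this is precisely the exponent $3i^2+3j^2+3k^2-2ij-2ik-2jk$ of \eqref{eq-thm-11-1}, to which \eqref{eq-thm-11-2} adds the linear term coming from $\widetilde{B}_1^\star$. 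Thus the triple sum would factor completely over $i,j,k$ were it not for the term $-\tfrac14 s^2$.

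The first step is to remove this term by a theta insertion. Writing $\Theta(q):=\sum_{v\in\mathbb{Z}}q^{v^2}=J_2^5/(J_1^2 J_4^2)$ (Gauss), a shift $v\mapsto v-s$ gives the elementary identity $\Theta(q)\,q^{-s^2}=\sum_{v\in\mathbb{Z}}q^{v^2-2vs}$ for every $s\in\mathbb{Z}$. Substituting $q^{-s^2}=\Theta(q)^{-1}\sum_v q^{v^2-2vs}$ into the left-hand side of \eqref{eq-thm-11-1} (with the exponent written as $4(i^2+j^2+k^2)-s^2$) and interchanging the order of summation — legitimate because $\widetilde{A}^\star$ is positive definite, as recorded in Table~\ref{tab-lift} — one finds that it equals
\[
\frac{1}{\Theta(q)}\sum_{v\in\mathbb{Z}}q^{v^2}H(v)^3,\qquad\text{where}\qquad H(v):=\sum_{n\geq 0}\frac{q^{4n^2-2vn}}{(q^4;q^4)_n},
\]
and the same computation turns the left-hand side of \eqref{eq-thm-11-2} into $\Theta(q)^{-1}\sum_{v\in\mathbb{Z}}q^{v^2}H(v+1)^2H(v-1)$ once the vector $\widetilde{B}_1^\star$ is absorbed into the linear couplings. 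In this way the rank-three sums are reduced to bilateral sums of cubes of a single Rogers--Ramanujan type series with a shifted argument.

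The second step is to evaluate these bilateral sums. The series $H(v)$ satisfies the three-term recurrence $H(v+2)=H(v)+q^{-2v}H(v-2)$, obtained from the $1-q^{4n}$ dissection, with base cases $H(0)$ and $H(-2)$ equal to the $q^4$-dilations of the Rogers--Ramanujan products in \eqref{RR}; iterating expresses every $H(v)$ as a Laurent-polynomial combination (with Schur-type polynomial coefficients) of $H(0),H(-2)$ when $v$ is even, and of $H(1),H(-1)$ when $v$ is odd. Splitting $\sum_{v\in\mathbb{Z}}$ by the parity of $v$, expanding the cubes, and collecting terms reduces the problem to a finite list of bilateral sums of the form $\sum_{m\in\mathbb{Z}}q^{\alpha m^2+\beta m}\cdot(\text{product of three Schur-type polynomials})$, each multiplied by a cubic monomial in the Rogers--Ramanujan products. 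These are then resolved using the known closed forms for products of Rogers--Ramanujan functions (such as $R_1(q)R_2(q)=J_5/J_1$, suitably dilated), the Jacobi triple product, the quintuple product identity, and $2$- and $3$-dissections of theta functions; matching the outcome against the three displayed terms — in particular pinning down the coefficients $2q^3,-q^4$ in \eqref{eq-thm-11-1} and $2,q,q^5$ in \eqref{eq-thm-11-2} — finishes the proof.

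The step I expect to be the real obstacle is this last evaluation: proving that the $q^{v^2}$-weighted bilateral sum of the cube of a dilated Rogers--Ramanujan series telescopes onto the stated three-term eta-quotients is where essentially all the content sits, and it is a substantial computation in theta-function identities; a secondary, purely bookkeeping point is to check that the half-integer powers of $q$ contributed by the odd-$v$ sector recombine into honest integral powers. If this direct route turns out to be unwieldy, an alternative — in the spirit of \cite{CRW,Wang-rank2,Wang-rank3} — is to realize the left-hand sides as the $\beta$-side of a twofold iterate of Bailey's lemma, with one Bailey-lattice step producing the off-diagonal entry $-\tfrac12$ of $\widetilde{A}^\star$, starting from a Bailey pair whose $\alpha$-side is a theta series; as a warm-up one may first re-derive the Rogers--Ramanujan type identities for Zagier's rank-two Example~10, which is exactly the dual of Example~11.
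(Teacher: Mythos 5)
Your reduction is set up correctly: the identity $\tfrac12 n^{\mathrm{T}}\widetilde{A}^\star n=i^2+j^2+k^2-\tfrac14(i+j+k)^2$, the theta insertion $q^{-s^2}=\Theta(q)^{-1}\sum_{v\in\mathbb{Z}}q^{v^2-2vs}$, the resulting expressions $\Theta(q)^{-1}\sum_v q^{v^2}H(v)^3$ and $\Theta(q)^{-1}\sum_v q^{v^2}H(v+1)^2H(v-1)$, and the recurrence $H(v+2)=H(v)+q^{-2v}H(v-2)$ are all valid. But the argument stops exactly where the content of the theorem begins. Iterating the recurrence and expanding the cube turns each parity class of $v$ into bilateral sums $\sum_m q^{4m^2+\beta m}P_1P_2P_3$, where the $P_i$ are Schur-type Laurent polynomials whose degrees grow quadratically in $m$; none of the tools you invoke (Jacobi triple product, quintuple product, the product formulas for Rogers--Ramanujan functions) evaluates such sums of \emph{products of three} Schur polynomials against a theta weight, and you supply no mechanism that does. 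Asserting that ``matching the outcome against the three displayed terms finishes the proof'' leaves the entire evaluation unproved. (The worry about half-integer powers in the odd-$v$ sector is also spurious --- every exponent occurring in $q^{v^2}H(v)^3$ is already an integer --- which suggests the computation was not actually attempted.)

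The paper's route is genuinely different and supplies the missing engine. Rather than inserting a theta series, it performs one of the three summations exactly: writing the exponent as $(i-j-k)^2+2(j-k)^2+2i^2$, it uses the Jacobi triple product, the $q$-binomial theorem and the $q$-Gauss sum inside a constant-term extraction \eqref{integration2} to collapse one sum, arriving at a double sum of the shape $\sum_{i,j}\frac{q^{2i^2+(i+2j)^2}}{(q^4;q^4)_i(q^4;q^4)_j}\sum_m q^{3m^2+2m(i+2j)}$ as in \eqref{11-key-step-1}. Completing the square in $m$ and splitting according to $i-j\pmod 3$ identifies the remaining double sum with the $3$-dissection of the rank-two Nahm sums of Zagier's Example 10, whose evaluations \eqref{conj-10-2}--\eqref{conj-10-1} (conjectured by Vlasenko--Zwegers and proved in \cite{CRW}) are the decisive external input; Lemma \ref{lem-3-dissection} then produces the stated right-hand sides. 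To salvage your approach you would need an analogous substantive input for $\sum_v q^{v^2}H(v)^3$ --- for instance, recognizing that bilateral sum as a known rank-two Nahm sum in disguise --- rather than hoping the Schur-polynomial expansion telescopes; your own closing suggestion to route the argument through Example 10 is in fact exactly what the paper does, but via a contour-integral reduction rather than a Bailey lattice.
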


The main difficulty of this work lies in finding and proving the Rogers--Ramanujan type identities for Nahm sums. We achieve this by applying various $q$-series techniques including the constant term method, integral method and Bailey pairs. There are some unexpected things during our proof. For instance, in order to prove Theorem \ref{thm-lift-11}, we need to use two identities found by Vlasenko--Zwegers \cite{VZ} on Zagier's Example 10.

The rest of this paper is organized as follows. In Section \ref{sec-pre} we review some known $q$-series identities which will be used in our proof. We also recall Bailey's lemma and its consequences. Sections \ref{sec-exam1}--\ref{sec-exam11} are devoted to discussing the new modular triples we found after applying the lift-dual operation to Zagier's Examples 1, 9 and 11, respectively. Finally, in Section \ref{sec-applictaion}, we discuss a special case of the matrix with parameters in Section \ref{sec-exam1}, and we show that there are other modular triples in addition to those described in Section \ref{sec-exam1}.

\section{Preliminaries}\label{sec-pre}

We need the Jacobi triple product identity (see e.g. \cite[Theorem 2.8]{Andrews-book}):
\begin{align}\label{JTP}
(q,z,q/z;q)_\infty=\sum_{n=-\infty}^\infty (-1)^nq^{\binom{n}{2}}z^n.
\end{align}
For convenience, besides \eqref{Jm} we also use the notation:
\begin{align}
\overline{J}_{a,m}=(-q^a,-q^{m-a},q^m;q^m)_\infty=\frac{J_m^2J_{2a,2m}}{J_{a,m}J_{2m}}.
\end{align}

 Whenever we write a $q$-series $f(q)$ in terms of $J_m$ and $J_{a,m}$ defined in \eqref{Jm}, we can use \eqref{eta-defn} and \eqref{general-eta} to find suitable $C$ so that $q^Cf(q)$ is modular. In the case of Nahm sums, if we are able to write a Nahm sum $f_{A,B,0}(q)$ as
\begin{align}
    f_{A,B,0}(q)=f_1(q)+f_2(q)+\cdots+f_k(q)
\end{align}
where $q^{C}f_i(q)$ are modular of weight zero for $i=1,2,\dots,k$, then we see that the Nahm sum $f_{A,B,C}(q)=q^Cf_{A,B,0}(q)$ is modular.

The basic hypergeometric series ${}_r\phi_s$ is defined as:
$${}_r\phi_s\bigg(\genfrac{}{}{0pt}{} {a_1,\dots,a_r}{b_1,\dots,b_s};q,z  \bigg):=\sum_{n=0}^\infty \frac{(a_1,\dots,a_r;q)_n}{(q,b_1,\dots,b_s;q)_n}\Big((-1)^nq^{\binom{n}{2}} \Big)^{1+s-r}z^n.$$
We now list some formulas for such series which will be invoked in our proofs.
\begin{enumerate}[(1)]
\item The $q$-binomial theorem \cite[Theorem 2.1]{Andrews-book} asserts that
\begin{align}\label{q-binomial}
\sum_{n\geq 0} \frac{(a;q)_n}{(q;q)_n}z^n=\frac{(az;q)_\infty}{(z;q)_\infty}, \quad |z|<1.
\end{align}
\item As corollaries to \eqref{q-binomial}, we have Euler's $q$-exponential identities \cite[Corollary 2.2]{Andrews-book}:
\begin{align}\label{Euler1}
\sum_{n\geq 0} \frac{z^n}{(q;q)_n}=\frac{1}{(z;q)_\infty}, \quad
\sum_{n\geq 0} \frac{z^nq^{\frac{n^2-n}{2}}}{(q;q)_n}=(-z;q)_\infty.
\end{align}
Here we need $|z|<1$ for the first identity.

\item The $q$-Gauss summation formula \cite[(1.5.1)]{Gasper-Rahman}:
\begin{align} \label{Gauss}
{}_2\phi_1\bigg(\genfrac{}{}{0pt}{} {a,b}{c};q,c/ab  \bigg)=\frac{(c/a,c/b;q)_\infty}{(c,c/ab;q)_\infty},  \quad \left| \frac{c}{ab} \right|<1.
\end{align}

\item The sum of a ${}_1\phi_1$ series \cite[\uppercase\expandafter{\romannumeral2}.5]{Gasper-Rahman}:
\begin{align} \label{1phi1}
{}_1\phi_1\bigg(\genfrac{}{}{0pt}{} {a}{c};q,c/a  \bigg)=\frac{(c/a;q)_\infty}{(c;q)_\infty}.
\end{align}

\item A $q$-analogue of Bailey's ${}_2F_{1}(-1)$ sum \cite[\uppercase\expandafter{\romannumeral2}.10]{Gasper-Rahman}:
\begin{align} \label{Bailey's}
{}_2\phi_2\bigg(\genfrac{}{}{0pt}{} {a,q/a}{-q,b};q,-b  \bigg)=\frac{(ab,bq/a;q^2)_\infty}{(b;q)_\infty}.
\end{align}

\item A $q$-analogue of Gauss' ${}_2F_{1}(-1)$ sum \cite[\uppercase\expandafter{\romannumeral2}.11]{Gasper-Rahman}:
\begin{align} \label{Gauss'}
{}_2\phi_2\bigg(\genfrac{}{}{0pt}{} {a^2,b^2}{abq^{1/2},-abq^{1/2}};q,-q  \bigg)=\frac{(a^2q,b^2q;q^2)_\infty}{(q,a^2b^2q;q^2)_\infty}.
\end{align}
\end{enumerate}

One main step in our calculations of rank three Nahm sums is to reduce them to single or double sums and then employ some known identities. Here we recall some single sum Rogers--Ramanujan type identities:
\begin{align}
&\sum_{n\geq 0} \frac{q^{n^2}(-q;q^2)_n}{(q^4;q^4)_{n}}=\frac{J_2J_{3,6}}{J_{1}J_{4}}, \quad \text{(S. 25)} \label{S. 25}\\
&\sum_{n\geq 0} \frac{q^{(n^2+n)/2}}{(q;q)_n(q;q^2)_{n+1}}=\frac{J_2J_{14}^{3}}{J_1J_{1,14}J_{4,14}J_{6,14}}, \quad \text{(S. 80)} \label{S. 80}\\
&\sum_{n\geq 0} \frac{q^{(n^2+n)/2}}{(q;q)_n(q;q^2)_{n}}=\frac{J_2J_{14}^{3}}{J_1J_{2,14}J_{3,14}J_{4,14}}, \quad \text{(S. 81)} \label{S. 81}\\
&\sum_{n\geq 0} \frac{q^{(n^2+3n)/2}}{(q;q)_n(q;q^2)_{n+1}}=\frac{J_2J_{14}^{3}}{J_1J_{2,14}J_{5,14}J_{6,14}}, \quad \text{(S. 82)} \label{S. 82}\\
&\sum_{n\geq 0} \frac{q^{n^2}}{(q;q^2)_n(q^4;q^4)_{n}}=\frac{J_2J_{14}J_{3,28}J_{11,28}}{J_1J_{28}J_{4,28}J_{12,28}}, \quad \text{(S. 117)} \label{S. 117}\\
&\sum_{n\geq 0} \frac{q^{n^2+2n}}{(q;q^2)_n(q^4;q^4)_{n}}=\frac{J_2J_{1,14}J_{12,28}}{J_1J_{4}J_{28}}, \quad \text{(S. 118)} \label{S. 118}\\
&\sum_{n\geq 0} \frac{q^{n^2+2n}}{(q;q)_{2n+1}(-q^2;q^2)_{n}}=\frac{J_2J_{5,14}J_{4,28}}{J_1J_{4}J_{28}}, \quad \text{(S. 119)} \label{S. 119}\\
&\sum_{n\geq 0} \frac{(-1)^nq^{n^2}}{(q^4;q^4)_{n}(-q;q^2)_{n}}=\frac{J_{1,14}J_{5,14}J_{7}}{J_{2,14}J_{4,14}J_{14}}, \quad \text{(\cite[Entry 3.5.4]{Andrews-Berndt})} \label{Entry 3.5.4}\\
&\sum_{n\geq 0} \frac{(-1)^nq^{n^2+2n}}{(q^4;q^4)_{n}(-q;q^2)_{n}}=\frac{J_{3,14}J_{5,14}J_{7}}{J_{4,14}J_{6,14}J_{14}}, \quad \text{(\cite[Entry 3.5.5]{Andrews-Berndt})} \label{Entry 3.5.5}\\
&\sum_{n\geq 0} \frac{(-1)^nq^{n^2+2n}}{(q^4;q^4)_{n}(-q;q^2)_{n+1}}=\frac{J_{1,14}J_{3,14}J_{7}}{J_{2,14}J_{6,14}J_{14}}, \quad \text{(\cite[Entry 3.5.6]{Andrews-Berndt})} \label{Entry 3.5.6}\\
&\sum_{n\geq 0} \frac{q^{n^2}(-1;q)_{n}}{(q;q)_{n}(q;q^2)_{n}}
=\sum_{n\geq 0} \frac{q^{n^2}(-q;q)_{n}}{(q;q)_{n}(q;q^2)_{n+1}}
=\frac{J_{2}J_{3}^2}{J_{1}^2J_{6}}, \label{Entry 4.2.8+4.2.9} \\
&\qquad \qquad \qquad \text{(\cite[Entries 4.2.8 and 4.2.9]{Andrews-Berndt})}  \nonumber  \\
&\sum_{n\geq 0} \frac{(-1)^nq^{n^2+2n}(q;q^2)_{n}}{(q^4;q^4)_{n}}=\frac{J_{1}J_{6}^2J_{2,12}}{J_{2}^2J_{1,6}J_{12}}, \quad \text{(\cite[Entry 4.2.11]{Andrews-Berndt})} \label{Entry 4.2.11} \\
&\sum_{n\geq 0} \frac{q^{2n^2}(-aq,-q/a;q^2)_{n}}{(q^2;q^2)_{2n}}=\frac{(-aq^3,-q^3/a,q^6;q^6)_\infty}{(q^2;q^2)_\infty}, \quad \text{(\cite[Entry 5.3.1]{Andrews-Berndt})} \label{Entry 5.3.1} \\
&\sum_{n\geq 0} \frac{(-x;q)_{n+1}(-q/x,\rho_{1},\rho_{2};q)_{n}}{(q;q)_{2n+1}}\Big(\frac{q^2}{\rho_{1}\rho_{2}}\Big)^n =\frac{(q^2/\rho_{1},q^2/\rho_{2};q)_{\infty}}{(q,q^2/\rho_{1}\rho_{2};q)_{\infty}} \nonumber \\
&\qquad \times \sum_{n\geq 0} \frac{(\rho_{1},\rho_{2};q)_{n}}{(q^2/\rho_{1},q^2/\rho_{2};q)_{n}}(x^{n+1}+x^{-n})\Big(\frac{q^2}{\rho_{1}\rho_{2}}\Big)^nq^{(n^2+n)/2}. \quad \text{(\cite[(5.2.4)]{Andrews-Berndt})} \label{Part2-5.2.4}
\end{align}
Here we use the label (S.\ $n$) to denote the equation $(n)$ in Slater's list \cite{Slater}.

Given any series $f(z)=\sum_{n\in \mathbb{Z}} a(n)z^n$, we define the constant term extractor (with respect to $z$) $\mathrm{CT}_z$ as
\begin{align}
\mathrm{CT}_z f(z)=a(0).
\end{align}
Clearly, when the integral of $f(z)$ along a positively oriented simple closed contour around the origin is well-defined we have
\begin{align}
    \mathrm{CT} f(z)=\oint f(z) \frac{dz}{2\pi iz}.
\end{align}

We recall a useful formula from \cite{Gasper-Rahman} to evaluate contour integral of infinite products. Suppose that
$$P(z):=\frac{(a_1 z,\dots,a_A z,b_1/z,\dots,b_B/ z;q)_{\infty}}
{(c_1 z,\dots,c_C z,d_1/z,\dots,d_D/z;q)_{\infty}}$$
has only simple poles. We have \cite[Eq. (4.10.5)]{Gasper-Rahman}
\begin{align}\label{Eq. (4.10.5)}
&\oint P(z)\frac{dz}{2\pi iz}
=\frac{(a_1d_1,\dots,a_Ad_1,b_1/d_1,\dots,b_B/d_1;q)_{\infty}}
{(c_1d_1,\dots,c_Cd_1,d_2/d_1,\dots,d_D/d_1;q)_{\infty}} \nonumber\\
&\quad \times \sum_{n\geq 0}^{\infty}\frac{(c_1d_1,\dots,c_Cd_1,qd_1/b_1,\dots,qd_1/b_B;q)_{n}}
{(q,a_1d_1,\dots,a_Ad_1,qd_1/d_2,\dots,qd_1/d_D;q)_{n}}   (-d_1q^{(n+1)/2})^{n(D-B)}(\frac{b_1\cdots b_B}{d_1\cdots d_D})^n \nonumber\\
&\qquad  +\mathrm{idem}(d_1;d_2,\dots,d_D)
\end{align}
when $D>B$ or if $D=B$ and
$$\left| \frac{b_1\cdots b_B}{d_1\cdots d_D}\right|<1.$$
Here the symbol $\mathrm{idem}(d_1;d_2,\dots,d_D)$ after an expression stands for the sum of the $(D-1)$ expressions obtained from the preceding expression by interchanging $d_1$ with each $d_k$, $k=2,3,\dots,D$, and the integration is over a positively oriented simple contour so that
\begin{enumerate}[(1)]
    \item the poles of $1/(c_1 z,\cdots,c_C z;q)_{\infty}$ lie outside the contour;
    \item the origin and poles of $1/(d_1 z,\cdots,d_D z;q)_{\infty}$ lie inside the contour.
\end{enumerate}

We will also need the Bailey lemma (see e.g. \cite{BIS,Lovejoy2004,McLaughlin}). A pair of sequences $(\alpha_n(a;q),\beta_n(a;q))$ is called a Bailey pair relative to $a$ if for all $n\geq 0$,
 \begin{align}\label{defn-BP}
     \beta_n(a;q)=\sum_{k=0}^n\frac{\alpha_k(a;q)}{(q;q)_{n-k}(aq;q)_{n+k}}.
 \end{align}

\begin{lemma}[Bailey's lemma]
Suppose that $(\alpha_{n}(a;q),\beta_{n}(a;q))$ is a Bailey pair relative to $a$. Then $(\alpha_{n}'(a;q),\beta_{n}'(a;q))$ is another Bailey pair relative to $a$, where
\begin{align}\label{Bailey's lemma}
&\alpha_{n}'(a;q)=\frac{(\rho_{1},\rho_{2};q)_{n}}{(aq/\rho_{1},aq/\rho_{2};q)_{n}}\left( \frac{aq}{\rho_{1}\rho_{2}}\right)^{n}\alpha_{n}(a;q),  \\
&\beta_{n}'(a;q)=\sum_{k=0}^{n}\frac{(\rho_{1},\rho_{2};q)_{k}(aq/\rho_{1}\rho_{2};q)_{n-k}}{(aq/\rho_{1},aq/\rho_{2};q)_{n}(q;q)_{n-k}}\left( \frac{aq}{\rho_{1}\rho_{2}}\right)^{k}\beta_{k}(a;q).
\end{align}
Equivalently, if $(\alpha_n(a;q),\beta_n(a;q))$ is a Bailey pair, then
\begin{align}\label{eq-Bailey-general-id}
&\frac{1}{(aq/\rho_1,aq/\rho_2;q)_n}\sum_{j=0}^n \frac{(\rho_1,\rho_2;q)_j(aq/\rho_1\rho_2;q)_{n-j}}{(q;q)_{n-j}}\Big(\frac{aq}{\rho_1\rho_2} \Big)^j\beta_j(a;q) \nonumber \\
&=\sum_{r=0}^n \frac{(\rho_1,\rho_2;q)_r}{(q;q)_{n-r}(aq;q)_{n+r}(aq/\rho_1,aq/\rho_2;q)_r}\Big(\frac{aq}{\rho_1\rho_2} \Big)^r \alpha_r(a;q).
\end{align}
\end{lemma}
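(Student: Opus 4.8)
The final statement is Bailey's lemma, and the plan is to prove it directly from the definition \eqref{defn-BP} of a Bailey pair. It suffices to check that the transformed pair $(\alpha_n'(a;q),\beta_n'(a;q))$ given in \eqref{Bailey's lemma} again satisfies the Bailey relation, namely
\begin{align*}
\beta_n'(a;q)=\sum_{k=0}^n\frac{\alpha_k'(a;q)}{(q;q)_{n-k}(aq;q)_{n+k}}.
\end{align*}
First I would insert the hypothesis \eqref{defn-BP}, written as $\beta_k(a;q)=\sum_{j=0}^k\alpha_j(a;q)\big/\big((q;q)_{k-j}(aq;q)_{k+j}\big)$, into the defining expression for $\beta_n'$ in \eqref{Bailey's lemma}. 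This yields a double sum over $0\le j\le k\le n$; interchanging the order so that the index $j$ carrying $\alpha_j(a;q)$ runs outermost reduces the whole claim to a single closed-form evaluation of the inner sum over $k$ for each fixed $j$.

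To carry out that evaluation I would set $k=j+m$ and use the elementary splitting $(x;q)_{j+m}=(x;q)_j(xq^j;q)_m$ together with the reversal identity $(x;q)_{N-m}=(x;q)_N\big(-q/x\big)^m q^{\binom{m}{2}-Nm}\big/(q^{1-N}/x;q)_m$ (with $N=n-j$) to factor out every Pochhammer symbol independent of $m$. After the powers of $q$ and of $aq/(\rho_1\rho_2)$ coming from the two reversals and from the factor $(aq/\rho_1\rho_2)^k$ collapse---they multiply out to exactly $q^m$ per term---the inner sum becomes, up to an explicit prefactor, the terminating balanced series
\begin{align*}
{}_3\phi_2\bigg(\genfrac{}{}{0pt}{} {q^{-(n-j)},\ \rho_1 q^{j},\ \rho_2 q^{j}}{aq^{2j+1},\ \rho_1\rho_2 q^{j-n}/a};q,q\bigg).
\end{align*}
One checks directly that its parameters satisfy the Saalschützian condition, so I may invoke the $q$-Pfaff--Saalschütz summation
\begin{align*}
{}_3\phi_2\bigg(\genfrac{}{}{0pt}{} {q^{-N},A,B}{C,ABq^{1-N}/C};q,q\bigg)=\frac{(C/A,C/B;q)_N}{(C,C/AB;q)_N},
\end{align*}
a classical terminating evaluation (see \cite[(1.7.2)]{Gasper-Rahman}), to sum it in closed form.

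Assembling the result is then routine bookkeeping: the evaluated ${}_3\phi_2$ contributes $(aq^{j+1}/\rho_1,aq^{j+1}/\rho_2;q)_{n-j}$ in the numerator and $(aq^{2j+1};q)_{n-j}$ in the denominator, the latter combining with the extracted factor $(aq;q)_{2j}$ via $(aq;q)_{2j}(aq^{2j+1};q)_{n-j}=(aq;q)_{n+j}$, while the splitting $(aq/\rho_i;q)_n=(aq/\rho_i;q)_j(aq^{j+1}/\rho_i;q)_{n-j}$ reduces the remaining factors to $1\big/(aq/\rho_1,aq/\rho_2;q)_j$; the outcome matches $\alpha_j'(a;q)\big/\big((q;q)_{n-j}(aq;q)_{n+j}\big)$ exactly, which proves that $(\alpha_n',\beta_n')$ is a Bailey pair. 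The ``equivalent'' formulation \eqref{eq-Bailey-general-id} needs no separate argument: substituting the explicit forms of $\alpha_r'(a;q)$ and $\beta_n'(a;q)$ turns \eqref{eq-Bailey-general-id} into precisely the defining Bailey relation \eqref{defn-BP} for the pair $(\alpha_n',\beta_n')$ at index $n$, so the two statements are literal restatements of one another. I expect the only real difficulty to be the second step, the careful tracking of $q$-Pochhammer symbols needed to recast the inner sum as a balanced ${}_3\phi_2$ with the correct five parameters and to verify the balancing condition; once the series is identified, the $q$-Saalschütz summation and the final simplification are mechanical.
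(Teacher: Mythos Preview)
Your argument is correct and is in fact the standard proof of Bailey's lemma via the $q$-Pfaff--Saalsch\"utz summation; the identification of the inner sum as a terminating balanced ${}_3\phi_2$ and the verification of the balancing condition $aq^{2j+1}\cdot\rho_1\rho_2 q^{j-n}/a=q^{-(n-j)}\cdot\rho_1 q^j\cdot\rho_2 q^j\cdot q$ are both fine, and the bookkeeping you describe goes through.

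There is nothing to compare against, however: the paper does not prove this lemma at all. It is stated as a classical result with references to \cite{BIS,Lovejoy2004,McLaughlin} (and implicitly to \cite{Gasper-Rahman}), and is used as a black box to generate the specialised consequences \eqref{BP-S1}--\eqref{Bailey's lemma-2}. So you have supplied a complete self-contained argument where the authors simply cite the literature.
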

We need the following special consequences of Bailey's lemma.
\begin{enumerate}[(1)]
\item Letting $\rho_1,\rho_2\rightarrow \infty$, we obtain the Bailey pair \cite[Eq.\ (S1)]{BIS}:
\begin{align}\label{BP-S1}
\alpha_n'(a;q)=a^nq^{n^2}\alpha_n(a;q), \quad \beta_n'(a;q)=\sum_{r=0}^n \frac{a^rq^{r^2}}{(q;q)_{n-r}}\beta_r(a;q).
\end{align}
If we further let $n\rightarrow \infty$, then we deduce from \eqref{eq-Bailey-general-id} that
\begin{align}\label{eq-BP-id-key}
\sum_{n=0}^\infty a^nq^{n^2}\beta_n(a;q)=\frac{1}{(aq;q)_\infty} \sum_{n=0}^\infty a^n q^{n^2}\alpha_n(a;q).
\end{align}

\item Letting $a=1$, $q \rightarrow q^4$, $\rho_{1}=-q^2/w$,
$\rho_{2} \rightarrow \infty$, we obtain the Bailey pair
\begin{align}
&\alpha_{n}'(1;q^4)=\frac{(-q^2/w;q^4)_{n}}{(-q^2w;q^4)_{n}}w^nq^{2n^2}\alpha_{n}(1;q^4), \label{Bailey's lemma-1.1} \\
&\beta_{n}'(1;q^4)=\sum_{k=0}^{n}\frac{(-q^2/w;q^4)_{k}w^kq^{2k^2}}{(-q^2w;q^4)_{n}(q^4;q^4)_{n-k}}\beta_{k}(1;q^4).\label{Bailey's lemma-1.2}
\end{align}
If we further let $n\rightarrow \infty$, then we deduce from \eqref{eq-Bailey-general-id} that
\begin{align}\label{Bailey's lemma-1}
&\sum_{k=0}^{\infty}(-q^2/w;q^4)_{k}w^kq^{2k^2}\beta_{k}(1;q^4) \nonumber \\
&=\frac{(-q^2w;q^4)_{\infty}}{(q^4;q^4)_{\infty}}\sum_{r=0}^{\infty}\frac{(-q^2/w;q^4)_{r}w^rq^{2r^2}}{(-q^2w;q^4)_{r}}\alpha_{r}(1;q^4).
\end{align}
\item Letting $a=q^4$, $q \rightarrow q^4$, $\rho_{1}=-q^4/w$,
$\rho_{2} \rightarrow \infty$, we obtain the Bailey pair
\begin{align}
&\alpha_{n}'(q^4;q^4)=\frac{(-q^4/w;q^4)_{n}}{(-q^4w;q^4)_{n}}w^nq^{2n^2+2n}\alpha_{n}(q^4,q^4), \label{Bailey's lemma-2.1} \\
&\beta_{n}'(q^4;q^4)=\sum_{k=0}^{n}\frac{(-q^4/w;q^4)_{k}w^kq^{2k^2+2k}}{(-q^4w;q^4)_{n}(q^4;q^4)_{n-k}}\beta_{k}(q^4;q^4).\label{Bailey's lemma-2.2}
\end{align}
If we further let $n\rightarrow \infty$, then we deduce from \eqref{eq-Bailey-general-id} that
\begin{align}\label{Bailey's lemma-2}
&\frac{1}{1-q^4}\sum_{n=0}^{\infty}(-q^4/w;q^4)_{n}w^nq^{2n^2+2n}\beta_{n}(q^4;q^4) \nonumber \\
&=\frac{(-wq^4;q^4)_{\infty}}{(q^4;q^4)_{\infty}}\sum_{r=0}^{\infty}\frac{(-q^4/w;q^4)_{r}w^rq^{2r^2+2r}}{(-wq^4;q^4)_{r}}\alpha_{r}(q^4;q^4).
\end{align}
\end{enumerate}

The following result of Lovejoy \cite[p.\ 1510]{Lovejoy2004} allows us to change the parameter $a$ to $aq$ .
\begin{lemma}\label{lem-BP-lift}
If $(\alpha_n(a;q),\beta_n(a;q))$ is a Bailey pair relative to $a$, then $(\alpha_n',\beta_n')$ is a Bailey pair relative to $aq$ where
\begin{equation}
\begin{split}
\alpha_n'(aq;q)&=\frac{(1-aq^{2n+1})(aq/b;q)_n(-b)^nq^{n(n-1)/2}}{(1-aq)(bq;q)_n}\sum_{r=0}^n \frac{(b;q)_r}{(aq/b;q)_r}  \\
&\qquad \times (-b)^{-r}q^{-r(r-1)/2} \alpha_r(a;q), \\
\beta_n'(aq;q)&=\frac{(b;q)_n}{(bq;q)_n}\beta_n(a;q).
\end{split}
\end{equation}
\end{lemma}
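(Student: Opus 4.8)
The plan is to verify directly that the pair $(\alpha_n',\beta_n')$ satisfies the defining relation \eqref{defn-BP} with $a$ replaced by $aq$; that is, I would show
\begin{align}
\frac{(b;q)_n}{(bq;q)_n}\beta_n(a;q)=\sum_{k=0}^n\frac{\alpha_k'(aq;q)}{(q;q)_{n-k}(aq^2;q)_{n+k}}.
\end{align}
First I would substitute the double-sum expression for $\alpha_k'(aq;q)$ into the right-hand side and interchange the order of summation, so that it becomes $\sum_{r=0}^n c_{n,r}\,\alpha_r(a;q)$ with an inner sum $c_{n,r}$ running over $r\le k\le n$. On the left-hand side I would expand $\beta_n(a;q)$ by the original Bailey relation \eqref{defn-BP}, turning it also into a linear form $\sum_{r=0}^n d_{n,r}\,\alpha_r(a;q)$ in the same quantities $\alpha_r(a;q)$. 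It then suffices to prove the coefficient identity $c_{n,r}=d_{n,r}$ for each $r$, which is a pure $q$-series identity free of the $\alpha_r$.

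After stripping the common $r$-only factors, this coefficient identity reduces to
\begin{align}
\sum_{k=r}^{n}\frac{(1-aq^{2k+1})(aq/b;q)_k(-b)^kq^{\binom{k}{2}}}{(1-aq)(bq;q)_k(q;q)_{n-k}(aq^2;q)_{n+k}}
=\frac{(aq/b;q)_r(-b)^rq^{\binom{r}{2}}(b;q)_n}{(b;q)_r(bq;q)_n(q;q)_{n-r}(aq;q)_{n+r}}.
\end{align}
Next I would set $k=r+j$ and use the standard reversal $(q;q)_{n-k}^{-1}=(q^{-(n-r)};q)_j(-1)^jq^{(n-r)j-\binom{j}{2}}/(q;q)_{n-r}$ together with the splittings $(aq/b;q)_{r+j}=(aq/b;q)_r(aq^{r+1}/b;q)_j$, $(aq^2;q)_{n+r+j}=(aq^2;q)_{n+r}(aq^{n+r+2};q)_j$ to factor off all $r$-only Pochhammers. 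A short computation then collapses the claim to the single terminating sum
\begin{align}
\sum_{j=0}^{n-r}\bigl(1-aq^{2r+1}q^{2j}\bigr)\frac{(aq^{r+1}/b;q)_j\,(q^{-(n-r)};q)_j}{(bq^{r+1};q)_j\,(aq^{n+r+2};q)_j}(bq^{n})^{j}=\frac{(1-bq^r)(1-aq^{n+r+1})}{1-bq^{n}}.
\end{align}

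The main step, and the one I expect to be the crux, is the evaluation of this last sum. Writing $A=aq^{2r+1}$ and $N=n-r$, the factor $1-Aq^{2j}$ signals a very-well-poised structure, and the key observation is that with the parameter choice $B=q$, $C=aq^{r+1}/b$, $D=q^{-N}$ one has $Aq/B=A$, $Aq/C=bq^{r+1}$, $Aq/D=aq^{n+r+2}$ and $Aq/(BCD)=bq^{n}$, so that the sum above is exactly $(1-A)$ times the terminating very-well-poised ${}_6\phi_5$ series in the parameters $A,B,C,D$. Crucially, the specialization $B=q$ cancels the usual factor $(A;q)_j/(q;q)_j$, which explains why the target sum carries no $(q;q)_j$ in its denominator. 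Applying the terminating very-well-poised ${}_6\phi_5$ summation (the $q$-analogue of Dougall's theorem) and simplifying the eight resulting product factors via $(xq^s;q)_\infty/(xq^{s+1};q)_\infty=1-xq^s$ yields precisely $\tfrac{(1-bq^r)(1-aq^{n+r+1})}{1-bq^{n}}$, completing the proof. The only genuine obstacle is organizational: tracking the $q$-shifted factorials through the substitution $k=r+j$ and matching the product factors of the ${}_6\phi_5$ evaluation. Should one prefer to avoid the named summation, the same identity can instead be obtained by induction on $N$ (equivalently by the $q$-analogue of Zeilberger's algorithm), since the right-hand side telescopes cleanly in $N$.
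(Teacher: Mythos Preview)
The paper does not supply its own proof of this lemma; it quotes the result from Lovejoy \cite[p.~1510]{Lovejoy2004} and moves on. So there is no argument in the paper to compare against, only a citation.

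Your direct verification is correct and self-contained. After interchanging the order of summation and matching coefficients of $\alpha_r(a;q)$, the reduction to
\[
\sum_{j=0}^{N}\bigl(1-Aq^{2j}\bigr)\frac{(aq^{r+1}/b;q)_j\,(q^{-N};q)_j}{(bq^{r+1};q)_j\,(aq^{n+r+2};q)_j}(bq^{n})^{j}
=\frac{(1-bq^{r})(1-aq^{n+r+1})}{1-bq^{n}},
\qquad A=aq^{2r+1},\ N=n-r,
\]
is exactly right, and your identification with the terminating very-well-poised ${}_6\phi_5$ at the special value $B=q$ is the key observation: the factors $(q;q)_j$ and $(A;q)_j$ cancel, and Rogers's summation (Gasper--Rahman (II.21))
\[
{}_6\phi_5\!\left[\begin{matrix}A,\,qA^{1/2},\,-qA^{1/2},\,q,\,aq^{r+1}/b,\,q^{-N}\\ A^{1/2},\,-A^{1/2},\,A,\,bq^{r+1},\,aq^{n+r+2}\end{matrix};q,\,bq^{n}\right]
=\frac{(Aq,\,bq^{r};q)_N}{(A,\,bq^{r+1};q)_N}
\]
gives, after multiplying by $1-A$ and telescoping the two ratio factors, precisely $(1-aq^{n+r+1})(1-bq^{r})/(1-bq^{n})$. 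One small cosmetic point: with this version of the summation only four Pochhammer factors appear in the evaluated product, not eight, so the final simplification is even shorter than you suggest. Otherwise the argument is complete.
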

In particular, when $b\rightarrow 0$ we obtain
\begin{equation}\label{eq-BP-lift}
\begin{split}
&\alpha_n'(aq;q)=\frac{(1-aq^{2n+1})a^nq^{n^2}}{1-aq}\sum_{r=0}^n a^{-r}q^{-r^2}\alpha_r(a;q), \\
&\beta_n'(aq;q)=\beta_n(a;q).
\end{split}
\end{equation}

The following result of Mc Laughlin \cite{McLaughlin} allows us to change the parameter $a$ to $a/q$.
\begin{lemma}\label{lem-BP-down}
If $(\alpha_n(a;q),\beta_n(a;q))$ is a Bailey pair relative to $a$, then $(\alpha_n',\beta_n')$ is a Bailey pair relative to $a/q$ where
\begin{align}
\alpha_0'(a/q;q)&=\alpha_0(a;q), \quad \alpha_n'(a/q;q)=(1-a)\Big(\frac{\alpha_n(a;q)}{1-aq^{2n}}-\frac{aq^{2n-2}\alpha_{n-1}(a;q)}{1-aq^{2n-2}}   \Big), \nonumber \\
\beta_n'(a/q;q)&=\beta_n(a;q).
\end{align}
\end{lemma}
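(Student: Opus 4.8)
The plan is to verify directly that the pair $(\alpha_n'(a/q;q),\beta_n'(a/q;q))$ satisfies the defining relation \eqref{defn-BP} for a Bailey pair relative to $a/q$. Since $(a/q)\cdot q=a$, that relation reads
$$\beta_n'(a/q;q)=\sum_{k=0}^n\frac{\alpha_k'(a/q;q)}{(q;q)_{n-k}(a;q)_{n+k}}.$$
Because $\beta_n'(a/q;q)=\beta_n(a;q)$ by hypothesis, and because the original pair satisfies $\beta_n(a;q)=\sum_{k=0}^n \alpha_k(a;q)/[(q;q)_{n-k}(aq;q)_{n+k}]$, the entire lemma reduces to showing that the right-hand side displayed above equals $\beta_n(a;q)$.

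To streamline the computation I would set $c_k:=\alpha_k(a;q)/(1-aq^{2k})$ for $k\geq 0$ and $c_{-1}:=0$. With this abbreviation the prescription for $\alpha_k'$ can be written uniformly for all $k\geq 0$ as $\alpha_k'(a/q;q)=(1-a)\bigl(c_k-aq^{2k-2}c_{k-1}\bigr)$; the $k=0$ case recovers $\alpha_0'(a/q;q)=(1-a)c_0=\alpha_0(a;q)$. Substituting this into the sum and splitting it into two pieces, I would reindex the piece carrying $c_{k-1}$ by $k\mapsto k+1$ so that both pieces are governed by the same $c_k$. The sum then collapses to
$$(1-a)\sum_{k=0}^n c_k\left[\frac{1}{(q;q)_{n-k}(a;q)_{n+k}}-\frac{aq^{2k}}{(q;q)_{n-k-1}(a;q)_{n+k+1}}\right],$$
where the second term inside the bracket is understood to vanish at $k=n$.

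The crucial simplification is the bracket. Putting its two fractions over the common denominator $(q;q)_{n-k}(a;q)_{n+k+1}$ via $(a;q)_{n+k+1}=(a;q)_{n+k}(1-aq^{n+k})$ and $(q;q)_{n-k}=(q;q)_{n-k-1}(1-q^{n-k})$, the numerator becomes $(1-aq^{n+k})-aq^{2k}(1-q^{n-k})$, which telescopes to $1-aq^{2k}$. This factor cancels exactly the $1-aq^{2k}$ in the denominator of $c_k$, so $c_k$ times the bracket equals $\alpha_k(a;q)/[(q;q)_{n-k}(a;q)_{n+k+1}]$. Hence the sum equals $(1-a)\sum_{k=0}^n \alpha_k(a;q)/[(q;q)_{n-k}(a;q)_{n+k+1}]$, and the identity $(a;q)_{n+k+1}=(1-a)(aq;q)_{n+k}$ turns this into $\sum_{k=0}^n \alpha_k(a;q)/[(q;q)_{n-k}(aq;q)_{n+k}]=\beta_n(a;q)$, which is precisely $\beta_n'(a/q;q)$, completing the verification.

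I do not anticipate a genuine obstacle here: the statement is a verification rather than a discovery, and the only delicate point is the bookkeeping with shifted $q$-Pochhammer symbols during the reindexing and the common-denominator step. The single nontrivial cancellation $(1-aq^{n+k})-aq^{2k}(1-q^{n-k})=1-aq^{2k}$ is what makes the telescoping succeed, and organizing the argument through the substitution $c_k=\alpha_k(a;q)/(1-aq^{2k})$ so that this cancellation becomes transparent is the main thing to get right.
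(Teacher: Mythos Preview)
Your verification is correct: the reindexing, the common-denominator step, and the key cancellation $(1-aq^{n+k})-aq^{2k}(1-q^{n-k})=1-aq^{2k}$ are all accurate, and the boundary case $k=n$ is handled properly. Note that the paper does not actually prove this lemma; it simply quotes the result from Mc~Laughlin's book, so your direct verification supplies what the paper omits.
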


\section{Dual to the lift of Zagier's Example 1}\label{sec-exam1}
From now on we will present some new modular triples discovered by applying the lift-dual operation to Zagier's rank two examples. For convenience, we write $(n_1,n_2,n_3)$ as $(i,j,k)$. The same symbols such as $A,B,C,F(u,v,w;q)$ may have different meanings in different places, but this will not cause any confusion.

Zagier's Example 1 states that for $a>\frac{1}{2}$ ($a\neq 1$) and $b\in \mathbb{Q}$, we have modular triples $(A,B_i,C_i)$ ($i=1,2,3,4$) where
\begin{equation}\label{eq-exam1-original}
    \begin{split}
&A=\begin{pmatrix}
a & 1-a \\
1-a & a
\end{pmatrix},  B_1=\begin{pmatrix} b \\ -b \end{pmatrix}, B_2=\begin{pmatrix}
    -1/2 \\ -1/2
\end{pmatrix}, B_3=\begin{pmatrix}
    1-a/2 \\ a/2
\end{pmatrix},  \\
&B_4=\begin{pmatrix}
    a/2 \\ 1-a/2
\end{pmatrix}, C_1=\frac{b^2}{2a}-\frac{1}{24}, C_2=\frac{1}{8a}-\frac{1}{24}, C_3=\frac{a}{8}-\frac{1}{24}, C_4=\frac{a}{8}-\frac{1}{24}.
\end{split}
\end{equation}
Here the last three vectors were found by Vlasenko--Zwegers \cite{VZ}. The identities justifying their modularity were given in \cite[Table 2]{VZ}:
\begin{align}
\sum_{i,j\geq 0}\frac{q^{\frac{1}{2}ai^2+(1-a)ij+\frac{1}{2}aj^2+b(i-j)}}{(q;q)_i(q;q)_j}&=\frac{1}{(q;q)_\infty} \sum_{n=-\infty}^\infty q^{\frac{1}{2}an^2+2bn}, \label{VZ-id-1}\\
\sum_{i,j\geq 0}\frac{q^{\frac{1}{2}ai^2+(1-a)ij+\frac{1}{2}aj^2-\frac{1}{2}i-\frac{1}{2}j}}{(q;q)_i(q;q)_j}&=\frac{2}{(q;q)_\infty} \sum_{n=-\infty}^\infty q^{\frac{1}{2}an^2+\frac{1}{2}n}, \label{VZ-id-2} \\
\sum_{i,j\geq 0}\frac{q^{\frac{1}{2}ai^2+(1-a)ij+\frac{1}{2}aj^2+(1-\frac{1}{2}a)i+\frac{1}{a}aj}}{(q;q)_i(q;q)_j}&=\frac{1}{2(q;q)_\infty} \sum_{n=-\infty}^\infty q^{\frac{1}{2}an^2+\frac{1}{2}an}. \label{VZ-id-3}
\end{align}

This lifts to rank three modular triples $(\widetilde{A},\widetilde{B}_i,C_i)$ where $C_i$ is as above and
\begin{equation}\label{exam1-lift}
\begin{split}
&\widetilde{A}=\begin{pmatrix} a & 2-a & 1 \\  2-a & a & 1 \\ 1 & 1 & 2 \end{pmatrix}, ~~ \widetilde{B}_1=\begin{pmatrix} b \\ -b \\ 0 \end{pmatrix}, ~~ \widetilde{B}_2= \begin{pmatrix} -1/2 \\ -1/2 \\ -1 \end{pmatrix}, \\
&\widetilde{B}_3=\begin{pmatrix} 1-a/2 \\ a/2 \\ 1 \end{pmatrix}, ~~ \widetilde{B}_4=\begin{pmatrix} a/2 \\ 1-a/2 \\ 1 \end{pmatrix}.
\end{split}
\end{equation}
The matrix $\widetilde{A}$ also appeared in a recent work of Gang--Kim--Park--Stubbs \cite[Sec.\ 4.2.3]{GKPS}. Considering its dual, we obtain possible modular triples $(\widetilde{A}^\star,\widetilde{B}_i^\star,C_i^\star)$ where
\begin{equation}\label{A-exam1-lift-dual}
\begin{split}
&\widetilde{A}^\star=\begin{pmatrix} \frac{2a-1}{4(a-1)} & \frac{2a-3}{4(a-1)} & -\frac{1}{2} \\ \frac{2a-3}{4(a-1)} & \frac{2a-1}{4(a-1)} & -\frac{1}{2} \\ -\frac{1}{2} & -\frac{1}{2} & 1 \end{pmatrix},  ~~
\widetilde{B}_1^\star=\begin{pmatrix} b/2(a-1) \\ -b/2(a-1) \\ 0 \end{pmatrix}, ~~ \widetilde{B}_2^\star=\begin{pmatrix} 0 \\ 0 \\ -1/2 \end{pmatrix}, \\
& \widetilde{B}_3^\star=\begin{pmatrix} -1/4 \\ 1/4  \\ 1/2 \end{pmatrix}, ~~ \widetilde{B}_4^\star=\begin{pmatrix} 1/4 \\ -1/4  \\ 1/2 \end{pmatrix}, ~~ C_1^\star=\frac{b^2}{2a(a-1)}-\frac{1}{12}, \\
& C_2^\star=\frac{1}{6}-\frac{1}{8a}, ~~ C_3^\star=\frac{1}{24},  ~~ C_4^\star=\frac{1}{24}.
\end{split}
\end{equation}
We may rewrite them as
\begin{align}\label{eq-vector-B-dual-exam1}
   & \widetilde{A}^\star=\begin{pmatrix} \frac{1}{2}+m & \frac{1}{2}-m & -\frac{1}{2} \\ \frac{1}{2}-m & \frac{1}{2}+m & -\frac{1}{2} \\ -\frac{1}{2} & -\frac{1}{2} & 1 \end{pmatrix}, ~~\widetilde{B}_1^\star=\begin{pmatrix} \nu \\ -\nu \\ 0 \end{pmatrix},\widetilde{B}_2^\star=\begin{pmatrix} 0 \\ 0 \\ -1/2 \end{pmatrix},  \widetilde{B}_3^\star=\begin{pmatrix} -1/4 \\ 1/4  \\ 1/2 \end{pmatrix}, \nonumber \\
    &\widetilde{B}_4^\star=\begin{pmatrix} 1/4 \\ -1/4  \\ 1/2 \end{pmatrix},~~ C_1^\star=\frac{2\nu^2}{(4m+1)}-\frac{1}{12}, ~~ C_2^\star=\frac{m+1}{6(4m+1)}, ~~ C_3^\star=\frac{1}{24},  ~~ C_4^\star=\frac{1}{24}.
\end{align}

We now prove that they are indeed modular. Since $i$ and $j$ are symmetric in the quadratic form generated by $\widetilde{A}^\star$, there are essentially only three Nahm sums to consider.
\begin{theorem}\label{2-Ex1-in}
We have
\begin{align}
&\sum_{i,j,k\geq 0} \frac{q^{2m(i-j)^2+(i+j)^2+2k^2-2(i+j)k+\nu(i-j)}}{(q^4;q^4)_i(q^4;q^4)_j(q^4;q^4)_k} \nonumber \\
&=\frac{J_4^3\overline{J}_{2(4m+\nu+1),4(4m+1)}}{J_2^2J_8^2} +2q^{2m+\nu+1} \frac{J_8^2\overline{J}_{-2\nu,4(4m+1)}}{J_4^3}, \label{thm1-id-1}\\
&\sum_{i,j,k\geq 0} \frac{q^{2m(i-j)^2+(i+j)^2+2k^2-2(i+j)k-2k}}{(q^4;q^4)_i(q^4;q^4)_j(q^4;q^4)_k} =4\frac{J_{8}^{2}\overline{J}_{8m,16m+4}}{J_{4}^{3}}+2q^{2m-1}\frac{J_{4}^{3}\overline{J}_{2,16m+4}}{J_{2}^{2}J_{8}^{2}}, \label{thm1-id-2}\\
&\sum_{i,j,k\geq 0} \frac{q^{2m(i-j)^2+(i+j)^2+2k^2-2(i+j)k+i-j+2k}}{(q^4;q^4)_i(q^4;q^4)_j(q^4;q^4)_k}=\frac{J_8^2\overline{J}_{8m+2,16m+4}}{J_4^3}+q^{2m}\frac{J_4^3J_{32m+8}^2}{J_2^2J_8^2J_{16m+4}}. \label{thm1-id-3}
\end{align}
\end{theorem}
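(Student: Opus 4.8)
The plan is to reduce each of the three triple sums to a combination of known single-sum (or double-sum) Rogers--Ramanujan type identities by summing over one variable at a time. First I would perform the linear change of summation indices $i+j = s$ with $i-j$ ranging over the appropriate parities; since $i,j\geq 0$ the pair $(i,j)$ is replaced by $(s,t)$ with $s\geq 0$, $t=i-j\in\{-s,-s+2,\dots,s\}$. The quadratic exponent becomes $2mt^2+s^2+2k^2-2sk+(\text{linear in }t,k)$, which is now \emph{diagonal} in $t$ once $s$ is fixed, so the inner sum over $t$ (equivalently over $i$ with $j=s-i$) is a Gaussian-type sum that, by the Jacobi triple product \eqref{JTP}, collapses to a $\overline{J}$-type theta quotient — this is the source of the $\overline{J}_{a,m}$ factors on the right-hand sides. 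Concretely, I expect to use the identity $\sum_{i+j=s}\frac{q^{2m(i-j)^2+\nu(i-j)}}{(q^4;q^4)_i(q^4;q^4)_j}$-type manipulations together with the $q$-binomial theorem \eqref{q-binomial} or Euler's identities \eqref{Euler1} to handle the $(q^4;q^4)_i(q^4;q^4)_j$ denominators after extracting a common $(q^4;q^4)_s^{-1}$ via a Gauss-binomial rearrangement.

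\textbf{Carrying it out.} After the $t$-sum, what remains is a double sum in $s$ and $k$ with exponent $s^2+2k^2-2sk$ (plus linear terms) over $(q^4;q^4)$-type denominators, coupled to a theta function in the parameter that tracked $t$. The key structural observation is that $s^2+2k^2-2sk = (s-k)^2+k^2$, so substituting $\ell=s-k$ decouples the remaining quadratic form into $\ell^2+k^2$; the double sum then factors, up to the theta coupling, and I would recognize the pieces as instances of the single-sum identities already collected in Section~\ref{sec-pre} — most plausibly \eqref{S. 25}, \eqref{S. 117}--\eqref{S. 119}, or the Slater-type sums with base $q^4$ and $(-q;q^2)_n$ factors, matched against the $\overline{J}$ and $J$ products appearing in \eqref{thm1-id-1}--\eqref{thm1-id-3}. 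The appearance of two terms on each right-hand side reflects the even/odd splitting of the index $t$ (i.e. the parity of $i-j$), each parity class contributing one theta term. For \eqref{thm1-id-3} the extra $J_{32m+8}^2/J_{16m+4}$ factor signals that one of the two contributions is itself a square of a theta function, which is consistent with a $q$-Gauss \eqref{Gauss} or ${}_2\phi_1$ evaluation in that branch.

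\textbf{Bookkeeping with the $C$-values.} Once the $q$-series identities \eqref{thm1-id-1}--\eqref{thm1-id-3} are established, modularity of the corresponding Nahm sums follows mechanically: one reads off from \eqref{eta-defn} and \eqref{general-eta} the power $q^{C_i^\star}$ that turns each $J_m$, $J_{a,m}$, $\overline{J}_{a,m}$ product into a weight-zero modular form, and checks that this matches the $C_i^\star$ recorded in \eqref{eq-vector-B-dual-exam1}; this is a finite rational-arithmetic verification using $B(x)=x^2-x+1/6$ and the eta-quotient weight count, not a conceptual step.

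\textbf{Main obstacle.} The genuine difficulty is the middle step: after the $t$-summation one is left with a \emph{specific} double sum whose evaluation is not literally one of the listed identities but must be massaged — via a Bailey-pair insertion (the lemmas with base $q^4$, \eqref{Bailey's lemma-1} or \eqref{Bailey's lemma-2}), a contour-integral/constant-term extraction \eqref{Eq. (4.10.5)}, or a clever re-indexing — into a known form. Identifying the correct Bailey pair (or the correct parameters $\rho_1,\rho_2,w$) that produces exactly the theta quotients on the right is where the real work lies, and I anticipate that, as the authors hint in the introduction regarding Theorem~\ref{thm-lift-11}, some of these reductions may require auxiliary identities (possibly the Vlasenko--Zwegers identities \eqref{VZ-id-1}--\eqref{VZ-id-3} themselves, since these Nahm sums are duals of lifts of Example~1) rather than being self-contained. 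A secondary technical point is justifying convergence and the validity of interchanging the order of summation in the double/triple sums, which is routine given positive-definiteness of $\widetilde{A}^\star$ but should be noted.
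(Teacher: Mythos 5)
There is a genuine gap at the heart of your plan. You claim that after fixing $s=i+j$, the inner sum over $t=i-j$ "collapses to a $\overline{J}$-type theta quotient" via the Jacobi triple product. It does not: for fixed $s$ the sum $\sum_{t}q^{2mt^2+\nu t}/\bigl((q^4;q^4)_{(s+t)/2}(q^4;q^4)_{(s-t)/2}\bigr)$ is a \emph{finite} sum whose denominators depend on $t$, so no theta function appears at this stage. The paper's actual mechanism is to recognize this finite sum as the $\beta_n$-component of a Bailey pair whose $\alpha_r$ is $q^{8mr^2}(u^{2r}+u^{-2r})$ (relative to $a=1$ for $s$ even and $a=q^4$ for $s$ odd), and only after applying Bailey's lemma in the form \eqref{Bailey's lemma-1}, \eqref{Bailey's lemma-2} and letting the outer index tend to infinity does the theta function emerge, to be evaluated by \eqref{JTP} at the very end. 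You do name these two Bailey-lemma specializations in your "main obstacle" paragraph, but naming them is not the same as seeing that the $t$-sum \emph{is} a $\beta_n$; as written, your first step fails and everything downstream (the "decoupled" $(s,k)$ double sum, the matching against \eqref{S. 25}, \eqref{S. 117}--\eqref{S. 119}) rests on it.

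A second, related miss is the order of summation. The paper sums over $k$ \emph{first} using Euler's identity \eqref{Euler1}, which converts the $k$-dependence into the infinite product $(-wq^{2-2(i+j)};q^4)_\infty$; expanding this product in the even/odd cases is precisely what supplies the factors $(-q^2/w;q^4)_n$, i.e.\ the Bailey parameter $\rho_1=-q^2/w$ with $\rho_2\to\infty$. Your proposed order (sum $t$ first, then decouple $s$ and $k$ via $\ell=s-k$) never sets this up, and the substitution $\ell=s-k$ does not in fact factor the double sum, since the denominators become $(q^4;q^4)_{\ell+k}(q^4;q^4)_k$. Finally, the auxiliary identities you anticipate needing (the Slater entries and the Vlasenko--Zwegers identities \eqref{VZ-id-1}--\eqref{VZ-id-3}) play no role in this theorem; the proof is self-contained given the Bailey lemma and \eqref{JTP}.
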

\begin{proof}
We define
\begin{align}
F(u,v,w;q^4)=\sum_{i,j,k\geq 0} \frac{u^iv^jw^kq^{2m(i-j)^2+(i+j)^2+2k^2-2(i+j)k}}{(q^4;q^4)_i(q^4;q^4)_j(q^4;q^4)_k}.
\end{align}
We have
\begin{align}
&F(u,u^{-1},w;q^4)=\sum_{i,j,k\geq 0} \frac{q^{2m(i-j)^2+(i+j)^2+2k^2-2(i+j)k}u^{i-j}w^k}{(q^4;q^4)_i(q^4;q^4)_j(q^4;q^4)_k}\nonumber\\
&=\sum_{i,j\geq 0} \frac{q^{2m(i-j)^2+(i+j)^2}u^{i-j}(-wq^{2-2(i+j)};q^4)_{\infty}}{(q^4;q^4)_i(q^4;q^4)_j}\quad \text{(set $i+j=n$}) \nonumber\\
&=\sum_{n=0}^{\infty}\sum_{j=0}^{n} \frac{q^{2m(n-2j)^2+n^2}u^{n-2j}(-wq^{2-2n};q^4)_{\infty}}{(q^4;q^4)_j(q^4;q^4)_{n-j}} \nonumber\\
&=S_0(q)+S_1(q) \label{1-proof-S},
\end{align}
where $S_0(q)$ and $S_1(q)$ correspond to the sum with $n$ even and odd, respectively, namely,
\begin{align}
S_0(q)&=(-wq^{2};q^4)_{\infty}\sum_{n=0}^\infty \sum_{j=0}^{2n} \frac{q^{8m(n-j)^2+2n^2}u^{2(n-j)}w^n(-q^{2}/w;q^4)_{n}}{(q^4;q^4)_j(q^4;q^4)_{2n-j}}, \\
S_1(q)&=(-w;q^4)_{\infty}\sum_{n=0}^\infty \sum_{j=0}^{2n+1} \frac{q^{8m(n-j)^2+8m(n-j)+2m+2n^2+2n+1}u^{2(n-j)+1}w^n(-q^{4}/w;q^4)_{n}}{(q^4;q^4)_j(q^4;q^4)_{2n-j+1}}.
\end{align}
We have
\begin{align}
&S_0(q)=(-wq^{2};q^4)_{\infty}\sum_{n=0}^\infty w^{n}q^{2n^2}(-q^{2}/w;q^4)_{n}
\Big(\sum_{r=0}^{n} \frac{q^{8mr^2}u^{2r}}{(q^4;q^4)_{n-r}(q^4;q^4)_{n+r}} \nonumber \\
&\qquad \qquad +\sum_{r=1}^{n} \frac{q^{8mr^2}u^{-2r}}{(q^4;q^4)_{n-r}(q^4;q^4)_{n+r}}\Big) \nonumber\\
&=(-wq^{2};q^4)_{\infty}\sum_{n=0}^\infty w^nq^{2n^2}(-q^{2}/w;q^4)_{n} \beta_n(1;q^4).
\label{1-proof-S0}
\end{align}
Here  $\beta_n(1;q^4)$ are defined by \eqref{defn-BP} with
\begin{align*}
    \alpha_r(1;q^4):=\left\{\begin{array}{ll}
    1 & r=0, \\
    q^{8mr^2}(u^{2r}+u^{-2r}) & r\geq 1.
    \end{array} \right.
\end{align*}
By \eqref{Bailey's lemma-1} we obtain
\begin{align}
S_0(q)=\frac{(-wq^{2};q^4)_{\infty}^2}{(q^4;q^4)_\infty}\Big(1+\sum_{r=1}^{\infty}\frac{(-q^2/w;q^4)_{r}}{(-q^2w;q^4)_{r}}w^rq^{(8m+2)r^2}(u^{2r}+u^{-2r})\Big).  \label{id-S0}
\end{align}

Similarly, we have
\begin{align}
&S_1(q)=(-w;q^4)_{\infty}\sum_{n=0}^{\infty}w^nq^{2n^2+2n+1}(-q^{4}/w;q^4)_{n}
\Big(\sum_{r=0}^{n} \frac{q^{8mr^2+8mr+2m}u^{2r+1}}{(q^4;q^4)_{n-r}(q^4;q^4)_{n+r+1}}\nonumber\\
&\qquad  \qquad +\sum_{r=0}^{n} \frac{q^{8mr^2+8mr+2m}u^{-2r-1}}{(q^4;q^4)_{n-r}(q^4;q^4)_{n+r+1}}\Big) \nonumber\\
&= \frac{q^{2m+1}}{1-q^4}(-w;q^4)_{\infty}\sum_{n=0}^{\infty}w^nq^{2n^2+2n}(-q^{4}/w;q^4)_{n}\beta_n(q^4;q^4). \label{1-proof-S1}
\end{align}
Here  $\beta_n(q^4;q^4)$ are defined by \eqref{defn-BP} with
\begin{align*}
\alpha_r(q^4;q^4)=q^{8mr^2+8mr}(u^{2r+1}+u^{-2r-1}).
\end{align*}By \eqref{Bailey's lemma-2} we obtain
\begin{align}
S_1(q)&=\frac{q^{2m+1}(-w,-wq^{4};q^4)_{\infty}}{(q^4;q^4)_\infty} \nonumber \\
&\qquad \times \sum_{r=0}^{\infty}\frac{(-q^4/w;q^4)_{r}}{(-q^4w;q^4)_{r}}w^rq^{(8m+2)r^2+(8m+2)r}(u^{2r+1}+u^{-2r-1}).  \label{id-S1}
\end{align}
Substituting \eqref{id-S0} and \eqref{id-S1} into \eqref{1-proof-S}, we deduce that
\begin{align}\label{exam1-S-result}
    &F(u,u^{-1},w;q^4)=\frac{(-wq^{2};q^4)_{\infty}^2}{(q^4;q^4)_\infty}\Big(1+\sum_{r=1}^{\infty}\frac{(-q^2/w;q^4)_{r}}{(-q^2w;q^4)_{r}}w^rq^{(8m+2)r^2}(u^{2r}+u^{-2r})\Big) \nonumber \\
    & +\frac{q^{2m+1}(-w,-wq^{4};q^4)_{\infty}}{(q^4;q^4)_\infty}\sum_{r=0}^{\infty}\frac{(-q^4/w;q^4)_{r}}{(-q^4w;q^4)_{r}}w^rq^{(8m+2)r^2+(8m+2)r}(u^{2r+1}+u^{-2r-1}).
\end{align}

(1) Setting $w=1$ in \eqref{exam1-S-result} and using \eqref{JTP}, we have
\begin{align}
&F(u,u^{-1},1;q^4)=\frac{(-q^{2};q^4)_{\infty}^2}{(q^4;q^4)_\infty}\sum_{r=-\infty}^{\infty}q^{(8m+2)r^2}u^{2r}\nonumber\\
&\qquad +\frac{q^{2m+1}(-1,-q^{4};q^4)_{\infty}}{(q^4;q^4)_\infty}\sum_{r=-\infty}^{\infty}q^{(8m+2)r^2+(8m+2)r}u^{2r+1} \nonumber\\
&=\frac{J_{4}^{3}(-u^2q^{8m+2},-q^{8m+2}/u^2,q^{16m+4};q^{16m+4})_{\infty}}{J_{2}^{2}J_{8}^{2}}\nonumber\\
&\qquad +\frac{2uq^{2m+1}J_{8}^{2}(-u^2q^{16m+4},-1/u^2,q^{16m+4};q^{16m+4})_{\infty}}{J_{4}^{3}}.
\end{align}
In particular, when $u=q^\nu$ we obtain \eqref{thm1-id-1}.

(2) Setting $w=q^{-2}$ in \eqref{exam1-S-result} and using \eqref{JTP}, we have
\begin{align}
&F(u,u^{-1},q^{-2};q^4)=\frac{(-1;q^4)_{\infty}^2}{(q^4;q^4)_\infty}\Big(1+\sum_{r=1}^{\infty}\frac{1+q^{4r}}{2}q^{(8m+2)r^2-2r}(u^{2r}+u^{-2r})\Big)\nonumber\\
&\qquad +\frac{q^{2m+1}(-q^{-2},-q^{2};q^4)_{\infty}}{(q^4;q^4)_\infty}\sum_{r=0}^{\infty}\frac{1+q^{4r+2}}{1+q^2}q^{(8m+2)r^2+8mr}(u^{2r+1}+u^{-2r-1}) \nonumber\\
&=\frac{2J_{8}^{2}}{J_{4}^{3}}\sum_{r=-\infty}^{\infty}q^{(8m+2)r^2-2r}(u^{2r}+u^{-2r}) \nonumber \\
&\qquad \qquad +q^{2m-1}\frac{J_{4}^{3}}{J_{2}^{2}J_{8}^{2}}\sum_{r=-\infty}^{\infty}q^{(8m+2)r^2+8mr}(u^{2r+1}+u^{-2r-1}) \nonumber\\
&=2\frac{J_{8}^{2}}{J_{4}^{3}}\Big((-u^2q^{8m},-q^{8m+4}/u^2,q^{16m+4};q^{16m+4})_{\infty} \nonumber\\
&\qquad \qquad +(-u^2q^{8m+4},-q^{8m}/u^2,q^{16m+4};q^{16m+4})_{\infty} \Big) \nonumber \\
&\qquad +q^{2m-1}\frac{J_{4}^{3}}{J_{2}^{2}J_{8}^{2}}\Big(u(-u^2q^{16m+2},-q^{2}/u^2,q^{16m+4};q^{16m+4})_{\infty} \nonumber \\
&\qquad  \qquad +u^{-1}(-u^2q^{2},-q^{16m+2}/u^2,q^{16m+4};q^{16m+4})_{\infty} \Big).
\end{align}
 In particular, when $u=1$ we obtain \eqref{thm1-id-2}.

(3) Setting $w=q^2$ in \eqref{exam1-S-result}, we have
\begin{align}
&F(u,u^{-1},q^{2};q^4)=\frac{(-q^4;q^4)_{\infty}^2}{(q^4;q^4)_\infty}\Big(1+\sum_{r=1}^{\infty}\frac{2}{1+q^{4r}}q^{(8m+2)r^2+2r}(u^{2r}+u^{-2r})\Big)\\
&\qquad +\frac{q^{2m+1}(-q^{2},-q^{6};q^4)_{\infty}}{(q^4;q^4)_\infty}\sum_{r=0}^{\infty}\frac{1+q^2}{1+q^{4r+2}}q^{(8m+2)r^2+(8m+4)r}(u^{2r+1}+u^{-2r-1}).     \nonumber
\end{align}
Setting $u=q$ and using \eqref{JTP}, we deduce that
\begin{align*}
&F(q,q^{-1},q^{2};q^4)=\frac{(-q^4;q^4)_{\infty}^2}{(q^4;q^4)_\infty}\Big(1+2\sum_{r=1}^{\infty}q^{(8m+2)r^2}\Big)\nonumber\\
&\qquad +\frac{q^{2m}(-q^{2};q^4)_{\infty}^{2}}{(q^4;q^4)_\infty}\sum_{r=0}^{\infty}q^{(8m+2)r^2+(8m+2)r}\nonumber\\
&=\frac{(-q^4;q^4)_{\infty}^2(-q^{8m+2},-q^{8m+2},q^{16m+4};q^{16m+4})_{\infty}}{(q^4;q^4)_\infty}\nonumber\\
&\qquad +\frac{q^{2m}(-q^{2};q^4)_{\infty}^{2}(-1,-q^{16m+4},q^{16m+4};q^{16m+4})_{\infty}}{2(q^4;q^4)_\infty}. \qedhere
\end{align*}
\end{proof}

\section{Dual to the lift of Zagier's Example 9}\label{sec-exam9}
Example 9 provides three modular triples $(A,B_i,C_i)$ ($i=1,2,3$) where
\begin{align}
    &A=\begin{pmatrix}
        1 & -1/2 \\ -1/2 & 3/4
    \end{pmatrix}, \quad B_1=\begin{pmatrix}
        -1/2 \\ 1/4
    \end{pmatrix}, ~~ B_2=\begin{pmatrix}
        0 \\ 0
    \end{pmatrix}, ~~ B_3=\begin{pmatrix}
        0 \\ 1/2
    \end{pmatrix}\nonumber \\
    &C_1=1/28, \quad C_2=-3/56, \quad C_3=1/56.
\end{align}
This lifts to the modular triples $(\widetilde{A},\widetilde{B}_i,C_i)$ where
\begin{align}
\widetilde{A}=\begin{pmatrix} 1 & 1/2 & 1/2 \\  1/2 & 3/4 & 1/4 \\ 1/2 & 1/4 & 3/4 \end{pmatrix}, \quad \widetilde{B}_1=
\begin{pmatrix} -1/2 \\ 1/4 \\ -1/4 \end{pmatrix}, \widetilde{B}_2=  \begin{pmatrix} 0 \\ 0 \\ 0 \end{pmatrix} , \widetilde{B}_3=\begin{pmatrix} 0 \\ 1/2 \\ 1/2 \end{pmatrix}.
\end{align}
Considering its dual, we expect that $(\widetilde{A}^\star,\widetilde{B}_i^\star,C_i^\star)$ are modular triples where
\begin{align}
&\widetilde{A}^\star=\begin{pmatrix} 2 & -1 & -1 \\  -1 & 2 & 0 \\ -1 & 0 & 2 \end{pmatrix}, \quad \widetilde{B}_1^\star=
\begin{pmatrix} -1 \\ 1 \\ 0 \end{pmatrix},  \quad \widetilde{B}_2^\star=\begin{pmatrix} 0 \\ 0 \\ 0 \end{pmatrix}, \quad \widetilde{B}_3^\star=\begin{pmatrix} -1 \\ 1 \\ 1 \end{pmatrix} \nonumber  \\
&C_1^\star=3/14, \quad C_2^\star=-1/14, \quad C_3^\star=5/14.
\end{align}
The matrix $\widetilde{A}^\star$ is essentially the Cartan matrix of the Dynkin diagram $A_3$ (see also \cite[Sec.\ 4.1.3]{GKPS}). We establish the following identities to prove their modularity.
\begin{theorem}\label{thm-3}
We have
\begin{align}
&\sum_{i,j,k\geq 0} \frac{q^{2i^2+2j^2+2k^2-2ij-2ik-2i+2j}}{(q^2;q^2)_i(q^2;q^2)_j(q^2;q^2)_k} \label{Thm2.4-1} \\
&=4 \frac{J_4^{6}J_{28}^{3}}{J_2^{6}J_{4,28}J_{6,28}J_{8,28}}-\frac{1}{2}\frac{J_{1}^{5}J_{7}J_{1,14}J_{3,14}}{J_{2}^{5}J_{2,14}J_{6,14}J_{14}}-\frac{1}{2}\frac{J_{2}^{11}J_{5,14}J_{4,28}}{J_1^{6}J_{4}^{6}J_{28}}, \nonumber \\
&\sum_{i,j,k\geq 0} \frac{q^{2i^2+2j^2+2k^2-2ij-2ik}}{(q^2;q^2)_i(q^2;q^2)_j(q^2;q^2)_k}  \label{Thm2.4-2} \\
&=\frac{1}{2}\frac{J_{2}^{11}J_{1,14}J_{12,28}}{J_1^{6}J_{4}^{6}J_{28}}
+\frac{1}{2}\frac{J_{1}^{5}J_{7}J_{3,14}J_{5,14}}{J_{2}^{5}J_{4,14}J_{6,14}J_{14}}
-4q^2 \frac{J_4^{6}J_{28}^{3}}{J_2^{6}J_{4,28}J_{10,28}J_{12,28}}, \nonumber \\&\sum_{i,j,k\geq 0} \frac{q^{2i^2+2j^2+2k^2-2ij-2ik-2i+2j+2k}}{(q^2;q^2)_i(q^2;q^2)_j(q^2;q^2)_k} \label{Thm2.4-3}
\\
&=\frac{1}{2q}\frac{J_{2}^{11}J_{14}J_{3,28}J_{11,28}}{J_1^{6}J_{4}^{5}J_{28}J_{4,28}J_{12,28}}
-\frac{1}{2q}\frac{J_{1}^{5}J_{7}J_{1,14}J_{5,14}}{J_{2}^{5}J_{2,14}J_{4,14}J_{14}}
-4\frac{J_4^{6}J_{28}^{3}}{J_2^{6}J_{2,28}J_{8,28}J_{12,28}}.  \nonumber
\end{align}
\end{theorem}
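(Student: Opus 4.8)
The plan is to mirror the proof of Theorem~\ref{2-Ex1-in}. Introduce the generating function
\[
F(u,v,w;q^2):=\sum_{i,j,k\geq 0}\frac{u^{i}v^{j}w^{k}\,q^{2i^2+2j^2+2k^2-2ij-2ik}}{(q^2;q^2)_i(q^2;q^2)_j(q^2;q^2)_k},
\]
and note that its exponent equals $(i-j)^2+(i-k)^2+j^2+k^2$ and is symmetric in $j,k$, so $F(u,v,w;q^2)=F(u,w,v;q^2)$; this explains why $\widetilde{B}_1^\star=(-1,1,0)^{\mathrm{T}}$ and $(-1,0,1)^{\mathrm{T}}$ give the same Nahm sum. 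The three identities of Theorem~\ref{thm-3} are precisely the evaluations $F(q^{-2},q^2,1;q^2)$, $F(1,1,1;q^2)$ and $F(q^{-2},q^2,q^2;q^2)$, so it suffices to obtain a closed form for $F$ at these three points.

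Because $j$ and $k$ are coupled to the ``central'' variable $i$ but not to each other, the goal is to reduce the triple sum to a single sum over $i$. Euler's identities \eqref{Euler1} do not sum the $j$- and $k$-series directly here (the coefficient $2$ of $j^2$ and $k^2$ is not half the modulus $2$), so I would use one of the two standard devices of the paper. \textbf{(i)} Constant-term/integral method: rewrite $q^{2j^2}$ and $q^{2k^2}$ by the Jacobi triple product \eqref{JTP} with base $q^4$, so that the $j$- and $k$-sums become geometric series of the shape $1/(x;q^2)_\infty$; after interchanging summations one is left with a constant-term expression in one or two auxiliary variables that is evaluated by formula \eqref{Eq. (4.10.5)}. \textbf{(ii)} Bailey-pair method: introduce the shifted index $n=i+j$ (resp.\ $n=i+k$), recognize the remaining inner sum as the $\beta_n$-component of a Bailey pair relative to $1$ (resp.\ $q^2$) with base $q^2$, and apply the $q\mapsto q^2$ analogue of \eqref{eq-BP-id-key} (as \eqref{Bailey's lemma-1}--\eqref{Bailey's lemma-2} were used in the rank-two lift) to collapse the sum against an explicit $\alpha$-sequence. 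Specializing $(u,v,w)$ to the three points above, the resulting parametrised infinite products then simplify by \eqref{JTP} and the $q$-binomial/Euler identities \eqref{q-binomial}, \eqref{Euler1}, leaving single sums of Rogers--Ramanujan type plus a few $\delta$-type boundary terms.

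The last step is to match these single sums against the catalogue in Section~\ref{sec-pre}. Writing everything in terms of $J_m$ and $J_{a,m}$, the three summands on each side of \eqref{Thm2.4-1}--\eqref{Thm2.4-3} are, after factoring out explicit eta quotients in $J_1,J_2,J_4$ (for example $J_4^6/J_2^6=(-q^2;q^2)_\infty^6$ and $J_2^{10}/(J_1^5J_4^5)=\big((q;q^2)_\infty(-q^2;q^2)_\infty\big)^{-5}$), exactly the product sides of Slater's identities \eqref{S. 80}--\eqref{S. 82}, \eqref{S. 117}--\eqref{S. 119} (with $q$ replaced by $q^2$) and of the Andrews--Berndt entries \eqref{Entry 3.5.4}--\eqref{Entry 3.5.6}. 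Substituting these closed forms and simplifying once more with \eqref{JTP} produces the claimed products; the coefficients $\tfrac12$ and $4$ arise, just as in the proof of Theorem~\ref{2-Ex1-in}, from evaluating theta series at half-periods, where factors $1+1=2$ or $(-1;q^2)_\infty=2(-q^2;q^2)_\infty$ appear.

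The main obstacle is the reduction: locating the Bailey pair (or constant-term representation) that genuinely decouples the three-dimensional sum, and then executing the long but essentially mechanical bookkeeping of the many eta-quotient factors and the ``extra'' theta quotients such as $J_{28}^3/(J_{4,28}J_{6,28}J_{8,28})$ without sign or indexing errors. A secondary subtlety is that the correct linear combination of catalogued identities is not dictated in advance and has to be pinned down term by term against the product side.
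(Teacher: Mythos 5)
Your proposal follows essentially the same route as the paper: the paper defines the same $F(u,v,w;q^2)$, writes the quadratic form as $i^2+(i-j-k)^2+(j-k)^2$, converts the two multi-variable squares into theta functions via \eqref{JTP}, sums the $j$- and $k$-series by \eqref{Euler1}, \eqref{q-binomial} and \eqref{Gauss}, and reduces to a single contour integral evaluated by \eqref{Eq. (4.10.5)} with $D=3$, whose three residue sums are exactly the catalogued identities \eqref{S. 80}--\eqref{S. 82}, \eqref{S. 117}--\eqref{S. 119} and \eqref{Entry 3.5.4}--\eqref{Entry 3.5.6} at the three specializations $F(q^{-2},q^2,1;q^2)$, $F(1,1,1;q^2)$, $F(q^{-2},q^2,q^2;q^2)$ you identify. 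Your option (i) is the method actually used, and the remaining work is the mechanical bookkeeping you describe.
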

\begin{proof}
We define
\begin{align}
F(u,v,w;q^2):=\sum_{i,j,k\geq 0} \frac{u^iv^jw^kq^{2i^2+2j^2+2k^2-2ij-2ik}}{(q^2;q^2)_i(q^2;q^2)_j(q^2;q^2)_k}.
\end{align}
By \eqref{Euler1} and \eqref{JTP} we have
\begin{align}\label{integration1}
&F(u,v,w;q^2)=
\sum_{i,j,k\geq 0} \frac{u^iv^jw^kq^{(i-j-k)^2+(j-k)^2+i^2}}{(q^2;q^2)_i(q^2;q^2)_j(q^2;q^2)_k} \nonumber\\
&=\oint \oint \sum_{i\geq 0}\frac{(uz)^iq^{i^2}}{(q^2;q^2)_i}\sum_{j\geq 0}\frac{(vy/z)^j}{(q^2;q^2)_j}\sum_{k\geq 0}\frac{(w/yz)^k}{(q^2;q^2)_k}\sum_{m=-\infty}^{\infty}z^{-m}q^{m^2}\sum_{n=-\infty}^{\infty}y^{-n}q^{n^2}\frac{dy}{2\pi iy}\frac{dz}{2\pi iz} \nonumber\\
&=\oint \oint \frac{(-quz,-qz,-q/z,q^2;q^2)_{\infty}(-qy,-q/y,q^2;q^2)_{\infty}}{(vy/z,w/yz;q^2)_{\infty}}\frac{dy}{2\pi iy}\frac{dz}{2\pi iz}\nonumber\\
&=\oint (-quz,-qz,-q/z,q^2,q^2;q^2)_{\infty} \nonumber\\
&\quad \times \oint \sum_{i\geq 0}\frac{(-qz/v;q^2)_i}{(q^2;q^2)_i}(vy/z)^i\sum_{j\geq 0}\frac{(-qz/w;q^2)_j}{(q^2;q^2)_j}(w/yz)^j \frac{dy}{2\pi iy}\frac{dz}{2\pi iz}\quad \text{(by (\ref{q-binomial}))} \nonumber\\
&=\oint (-quz,-qz,-q/z,q^2,q^2;q^2)_{\infty}\sum_{i\geq 0}\frac{(-qz/v,-qz/w;q^2)_i}{(q^2,q^2;q^2)_i}(\frac{vw}{z^2})^i \frac{dz}{2\pi iz} \nonumber\\
&=\oint (-quz,-qz,-q/z,q^2,q^2;q^2)_{\infty}\frac{(-qv/z,-qw/z;q^2)_{\infty}}{(vw/z^2,q^2;q^2)_{\infty}} \frac{dz}{2\pi iz} \quad \text{(by (\ref{Gauss}))} \nonumber\\
&=\oint \frac{(-quz,-qv/z,-qw/z,-qz,-q/z,q^2;q^2)_{\infty}}{(vw/z^2;q^2)_{\infty}} \frac{dz}{2\pi iz}.
\end{align}
(1) By \eqref{integration1}, the left side of \eqref{Thm2.4-1} is the same as
\begin{align}
F(q^{-2},q^2,1;q^2)&=\oint \frac{(-z/q,-q^3/z,-q/z,-qz,-q/z,q^2;q^2)_{\infty}}{(q^2/z^2;q^2)_{\infty}} \frac{dz}{2\pi iz} \nonumber\\
&=\oint \frac{(-z/q,-qz,-q/z,-q^3/z,q^2;q^2)_{\infty}}{(q/z,-q^2/z,q^2/z;q^2)_{\infty}} \frac{dz}{2\pi iz}.
\end{align}
Applying \eqref{Eq. (4.10.5)} with
$$(A,B,C,D)=(2,2,0,3),$$
$$(a_1,a_2)=(-1/q,-q),\quad (b_1,b_2)=(-q,-q^3),\quad (d_1,d_2,d_3)=(q,-q^2,q^2),$$
we deduce that
\begin{align}\label{Thm2.4-1-R}
F(q^{-2},q^2,1;q^2)=J_2(R_1(q)+R_2(q)+R_3(q)),
\end{align}
where
\begin{align}
&R_1(q)=\frac{(-1,-q^2,-1,-q^2;q^2)_{\infty}}{(q^2,-q,q;q^2)_{\infty}}\sum_{n\geq 0}\frac{(-q^2,-1;q^2)_{n}}{(q^2,-1,-q^2,-q,q;q^2)_{n}}q^{n^2+n},\\
&R_2(q)=\frac{(q,q^3,1/q,q;q^2)_{\infty}}{(q^2,-1/q,-1;q^2)_{\infty}}\sum_{n\geq 0}\frac{(q^3,q;q^2)_{n}}{(q^2,q,q^3,-q^3,-q^2;q^2)_{n}}(-1)^nq^{n^2+2n},\\
&R_3(q)=\frac{(-q,-q^3,-1/q,-q;q^2)_{\infty}}{(q^2,1/q,-1;q^2)_{\infty}}\sum_{n\geq 0}\frac{(-q^3,-q;q^2)_{n}}{(q^2,-q,-q^3,q^3,-q^2;q^2)_{n}}q^{n^2+2n}.
\end{align}
By \eqref{S. 81} with $q$ replaced by $q^2$, we have
\begin{align}\label{Thm2.4-1-R1}
R_1(q)=4\frac{(q^4;q^4)_{\infty}^{4}}{(q^2;q^2)_{\infty}^{5}(q^2;q^4)_{\infty}}\sum_{n\geq 0}\frac{q^{n^2+n}}{(q^2;q^2)_{n}(q^2;q^4)_{n}}
=4\frac{J_{4}^{6}J_{28}^{3}}{J_{2}^{7}J_{4,28}J_{6,28}J_{8,28}}.
\end{align}
Similarly, by \eqref{Entry 3.5.6} we have
\begin{align}\label{Thm2.4-1-R2}
R_2(q)=-\frac{1}{2}\frac{(q;q^2)_{\infty}^{4}}{(q^2;q^2)_{\infty}(-q;q)_{\infty}}\sum_{n\geq 0}\frac{(-1)^nq^{n^2+2n}}{(q^4;q^4)_{n}(-q;q^2)_{n+1}}
=-\frac{1}{2}\frac{J_{1}^{5}J_{1,14}J_{3,14}J_{7}}{J_{2}^{6}J_{2,14}J_{6,14}J_{14}}.
\end{align}
Next, by \eqref{S. 119} we have
\begin{align}\label{Thm2.4-1-R3}
R_3(q)=-\frac{1}{2}\frac{(-q;q^2)_{\infty}^{4}}{(q;q)_{\infty}(-q^2;q^2)_{\infty}}\sum_{n\geq 0}\frac{q^{n^2+2n}}{(q^4;q^4)_{n}(q;q^2)_{n+1}}
=-\frac{1}{2}\frac{J_{2}^{10}J_{5,14}J_{4,28}}{J_{1}^{6}J_{4}^{6}J_{28}}.
\end{align}
Now substituting \eqref{Thm2.4-1-R1}--\eqref{Thm2.4-1-R3} into \eqref{Thm2.4-1-R}, we get \eqref{Thm2.4-1}.

(2) By (\ref{integration1}), the left side of (\ref{Thm2.4-2}) is the same as
\begin{align}
F(1,1,1;q^2)&=\oint \frac{(-qz,-q/z,-q/z,-qz,-q/z,q^2;q^2)_{\infty}}{(1/z^2;q^2)_{\infty}} \frac{dz}{2\pi iz} \nonumber\\
&=\oint \frac{(-qz,-qz,-q/z,-q/z,q^2;q^2)_{\infty}}{(1/z,-1/z,q/z;q^2)_{\infty}} \frac{dz}{2\pi iz}.
\end{align}
Applying (\ref{Eq. (4.10.5)}) with
$$(A,B,C,D)=(2,2,0,3),$$
$$(a_1,a_2)=(-q,-q),\quad (b_1,b_2)=(-q,-q),\quad (d_1,d_2,d_3)=(1,-1,q),$$
we deduce that
\begin{align}\label{Thm2.4-2-S}
F(1,1,1;q^2)=J_2(S_1(q)+S_2(q)+S_3(q)),
\end{align}
where
\begin{align}
&S_1(q)=\frac{(-q,-q,-q,-q;q^2)_{\infty}}{(q^2,-1,q;q^2)_{\infty}}\sum_{n\geq 0}\frac{(-q,-q;q^2)_{n}}{(q^2,-q,-q,-q^2,q;q^2)_{n}}q^{n^2+2n},\\
&S_2(q)=\frac{(q,q,q,q;q^2)_{\infty}}{(q^2,-1,-q;q^2)_{\infty}}\sum_{n\geq 0}\frac{(q,q;q^2)_{n}}{(q^2,q,q,-q^2,-q;q^2)_{n}}(-1)^nq^{n^2+2n},\\
&S_3(q)=\frac{(-q^2,-q^2,-1,-1;q^2)_{\infty}}{(q^2,1/q,-1/q;q^2)_{\infty}}\sum_{n\geq 0}\frac{(-q^2,-q^2;q^2)_{n}}{(q^2,-q^2,-q^2,q^3,-q^3;q^2)_{n}}q^{n^2+3n}.
\end{align}
By \eqref{S. 118} we have
\begin{align}\label{Thm2.4-2-S1}
S_1(q)=\frac{1}{2}\frac{(q^2;q^4)_{\infty}^{4}}{(q;q^2)_{\infty}^{5}(q^4;q^4)_{\infty}}\sum_{n\geq 0}\frac{q^{n^2+2n}}{(q;q^2)_{n}(q^4;q^4)_{n}}
=\frac{1}{2}\frac{J_{2}^{10}J_{1,14}J_{12,28}}{J_{1}^{6}J_{4}^{6}J_{28}}.
\end{align}
Similarly, by \eqref{Entry 3.5.5} we have
\begin{align}\label{Thm2.4-2-S2}
S_2(q)=\frac{1}{2}\frac{(q;q^2)_{\infty}^{5}}{(q^2;q^2)_{\infty}}\sum_{n\geq 0}\frac{(-1)^nq^{n^2+2n}}{(q^4;q^4)_{n}(-q;q^2)_{n}}
=\frac{1}{2}\frac{J_{1}^{5}J_{3,14}J_{5,14}J_{7}}{J_{2}^{6}J_{4,14}J_{6,14}J_{14}}.
\end{align}
Next, by \eqref{S. 82} with $q$ replaced by $q^2$, we have
\begin{align}\label{Thm2.4-2-S3}
S_3(q)=-4q^2\frac{(q^4;q^4)_{\infty}^{4}}{(q^2;q^2)_{\infty}^{5}(q^2;q^4)_{\infty}}\sum_{n\geq 0}\frac{q^{n^2+3n}}{(q^2;q^2)_{n}(q^2;q^4)_{n+1}}
=-4q^2\frac{J_{4}^{6}J_{28}^{3}}{J_{2}^{7}J_{4,28}J_{10,28}J_{12,28}}.
\end{align}
Now substituting \eqref{Thm2.4-2-S1}--\eqref{Thm2.4-2-S3} into \eqref{Thm2.4-2-S}, we get \eqref{Thm2.4-2}.

(3) By (\ref{integration1}), the left side of (\ref{Thm2.4-3}) is the same as
\begin{align}
F(q^{-2},q^2,q^2;q^2)&=\oint \frac{(-z/q,-q^3/z,-q^3/z,-qz,-q/z,q^2;q^2)_{\infty}}{(q^4/z^2;q^2)_{\infty}} \frac{dz}{2\pi iz} \nonumber\\
&=\oint \frac{(-z/q,-qz,-q^3/z,-q/z,q^2;q^2)_{\infty}}{(q^2/z,-q^2/z,q^3/z;q^2)_{\infty}} \frac{dz}{2\pi iz}.
\end{align}
Applying (\ref{Eq. (4.10.5)}) with
$$(A,B,C,D)=(2,2,0,3),$$
$$(a_1,a_2)=(-1/q,-q),\quad (b_1,b_2)=(-q^3,-q),\quad (d_1,d_2,d_3)=(q^2,-q^2,q^3),$$
we deduce that
\begin{align}\label{Thm2.4-3-T}
F(q^{-2},q^2,q^2;q^2)=J_2(T_1(q)+T_2(q)+T_3(q)),
\end{align}
where
\begin{align}
&T_1(q)=\frac{(-q,-q^3,-q,-1/q;q^2)_{\infty}}{(q^2,-1,q;q^2)_{\infty}}\sum_{n\geq 0}\frac{(-q,-q^3;q^2)_{n}}{(q^2,-q,-q^3,-q^2,q;q^2)_{n}}q^{n^2},\\
&T_2(q)=\frac{(q,q^3,q,1/q;q^2)_{\infty}}{(q^2,-1,-q;q^2)_{\infty}}\sum_{n\geq 0}\frac{(q,q^3;q^2)_{n}}{(q^2,q,q^3,-q^2,-q;q^2)_{n}}(-1)^nq^{n^2},\\
&T_3(q)=\frac{(-q^2,-q^4,-1,-1/q^2;q^2)_{\infty}}{(q^2,1/q,-1/q;q^2)_{\infty}}\sum_{n\geq 0}\frac{(-q^2,-q^4;q^2)_{n}}{(q^2,-q^2,-q^4,q^3,-q^3;q^2)_{n}}q^{n^2+n}.
\end{align}
By \eqref{S. 117} we have
\begin{align}\label{Thm2.4-3-T1}
T_1(q)=\frac{1}{2q}\frac{(-q;q^2)_{\infty}^{4}}{(q;q^2)_{\infty}(q^4;q^4)_{\infty}}\sum_{n\geq 0}\frac{q^{n^2}}{(q;q^2)_{n}(q^4;q^4)_{n}}
=\frac{1}{2q}\frac{J_{2}^{10}J_{14}J_{3,28}J_{11,28}}{J_{1}^{6}J_{4}^{5}J_{28}J_{4,28}J_{12,28}}.
\end{align}
Similarly, by \eqref{Entry 3.5.4} we have
\begin{align}\label{Thm2.4-3-T2}
T_2(q)=-\frac{1}{2q}\frac{(q;q^2)_{\infty}^{4}}{(-q;q^2)_{\infty}(q^4;q^4)_{\infty}}\sum_{n\geq 0}\frac{(-1)^nq^{n^2}}{(q^4;q^4)_{n}(-q;q^2)_{n}}
=-\frac{1}{2q}\frac{J_{1}^{5}J_{1,14}J_{5,14}J_{7}}{J_{2}^{6}J_{2,14}J_{4,14}J_{14}}.
\end{align}
Next, by \eqref{S. 80} with $q$ replaced by $q^2$, we have
\begin{align}\label{Thm2.4-3-T3}
T_3(q)=-4\frac{(-q^2;q^2)_{\infty}^{4}}{(q^2;q^2)_{\infty}(q^2;q^4)_{\infty}}\sum_{n\geq 0}\frac{q^{n^2+n}}{(q^2;q^2)_{n}(q^2;q^4)_{n+1}}
=-4\frac{J_{4}^{6}J_{28}^{3}}{J_{2}^{7}J_{2,28}J_{8,28}J_{12,28}}.
\end{align}
Now substituting \eqref{Thm2.4-3-T1}--\eqref{Thm2.4-3-T3} into \eqref{Thm2.4-3-T}, we get \eqref{Thm2.4-3}.
\end{proof}
\begin{rem}
The idea behind the deduction of \eqref{integration1} comes from Wang's proof \cite[Theorem 4.8]{Wang-rank3} of Zagier's seventh example on rank three Nahm sums \cite[Table 3]{Zagier}. This will also be used in \eqref{integration2} in Section \ref{sec-exam11}.
\end{rem}

\section{Dual to the lift of Zagier's Example 11}\label{sec-exam11}
Surprisingly, the proof of Theorem \ref{thm-lift-11} will rely on the following identities associated with Zagier's Example 10:
\begin{align}
\sum_{i,j\geq 0} \frac{q^{2i^2+2ij+2j^2}}{(q^3;q^3)_i(q^3;q^3)_j}
&=\frac{1}{J_3}\left(J_{21,45}-q^3J_{6,45}+2q^2J_{9,45}  \right), \label{conj-10-2} \\
\sum_{i,j\geq 0} \frac{q^{2i^2+2ij+2j^2-2i-j}}{(q^3;q^3)_i(q^3;q^3)_j}&=\frac{1}{J_3}\left(2J_{18,45}+qJ_{12,45}+q^4J_{3,45}\right). \label{conj-10-1}
\end{align}
They were conjectured by Vlasenko--Zwegers \cite[p.\ 633, Table 1]{VZ} and proved by Cao--Rosengren--Wang \cite{CRW} through purely $q$-series approach.

The identities \eqref{conj-10-2} and  \eqref{conj-10-1} give 3-dissection formulas for the Nahm sums on the left side. We record the following consequence which play a key role in our proof of Theorem \ref{thm-lift-11}.
\begin{lemma}\label{lem-3-dissection}
For $r\in \{-1,0,1\}$ we define
\begin{align}
    S_r(q):=\sum_{\begin{smallmatrix}
        i,j\geq 0 \\ i-j\equiv r \!\!\! \pmod{3}
    \end{smallmatrix}} \frac{q^{2i^2+2ij+2j^2}}{(q^3;q^3)_i(q^3;q^3)_j}, \\
    T_r(q):=\sum_{\begin{smallmatrix}
        i,j\geq 0 \\ i-j\equiv r \!\!\! \pmod{3}
    \end{smallmatrix}} \frac{q^{2i^2+2ij+2j^2-2i-j}}{(q^3;q^3)_i(q^3;q^3)_j}.
\end{align}
We have
\begin{align}
  &  S_0(q)=\frac{J_{21,45}-q^3J_{6,45}}{J_3}, \label{11-S0-result}\\
   & S_1(q)=S_{-1}(q)=q^2\frac{J_{9,45}}{J_3}, \label{11-S1-result} \\
&T_0(q)+T_1(q)=2\frac{J_{18,45}}{J_3}, \label{11-T0T1-result} \\
&T_{-1}(q)=\frac{qJ_{12,45}+q^4J_{3,45}}{J_3}. \label{11-T2-result}
\end{align}
\end{lemma}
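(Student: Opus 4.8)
The plan is to deduce all four identities purely from the $3$-dissections of the Nahm sums in \eqref{conj-10-2} and \eqref{conj-10-1}, sorting terms on both sides according to the exponent of $q$ modulo $3$. First I would record the elementary congruences governing the left-hand sides. Writing $2i^2+2ij+2j^2=2(i-j)^2+6ij$, one sees that modulo $3$ this exponent equals $2(i-j)^2$, which is $0$ when $i\equiv j\pmod 3$ and $2$ when $i\not\equiv j\pmod 3$; hence in $S_r(q)$ every exponent is $\equiv 0\pmod 3$ when $r=0$ and $\equiv 2\pmod 3$ when $r=\pm 1$. Similarly, since $-2i-j\equiv i-j\pmod 3$, the exponent $2i^2+2ij+2j^2-2i-j$ is congruent modulo $3$ to $2(i-j)^2+(i-j)$, which is $0$ for $i-j\equiv 0,1\pmod 3$ and $1$ for $i-j\equiv -1\pmod 3$; thus all exponents in $T_0(q)$ and $T_1(q)$ are $\equiv 0\pmod 3$, while those in $T_{-1}(q)$ are $\equiv 1\pmod 3$.

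Next I would do the same bookkeeping on the product sides. All the factors $J_3$, $J_{21,45}$, $J_{6,45}$, $J_{9,45}$, $J_{18,45}$, $J_{12,45}$, $J_{3,45}$ involve only powers of $q$ divisible by $3$, because $45$ and each of the relevant indices $21,6,9,18,12,3$ together with their complements modulo $45$ are divisible by $3$; hence $1/J_3$ times any of these is supported on multiples of $q^3$. Consequently $J_{21,45}/J_3$ and $q^3J_{6,45}/J_3$ contribute only exponents $\equiv 0\pmod 3$, while $q^2J_{9,45}/J_3$ contributes only exponents $\equiv 2\pmod 3$; likewise $J_{18,45}/J_3$ contributes exponents $\equiv 0\pmod 3$ and both $qJ_{12,45}/J_3$ and $q^4J_{3,45}/J_3$ contribute exponents $\equiv 1\pmod 3$.

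Now I would extract residue classes. Since $S_0(q)+S_1(q)+S_{-1}(q)$ is the left side of \eqref{conj-10-2}, comparing the parts supported on exponents $\equiv 0\pmod 3$ gives $S_0(q)=(J_{21,45}-q^3J_{6,45})/J_3$, which is \eqref{11-S0-result}, and comparing the parts supported on exponents $\equiv 2\pmod 3$ gives $S_1(q)+S_{-1}(q)=2q^2J_{9,45}/J_3$. The summand defining $S_r(q)$ is symmetric under $i\leftrightarrow j$ (the quadratic form is symmetric and there is no linear term), and this swap interchanges the classes $r=1$ and $r=-1$; hence $S_1(q)=S_{-1}(q)$, and \eqref{11-S1-result} follows. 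Identically, from \eqref{conj-10-1} the exponents $\equiv 0\pmod 3$ yield $T_0(q)+T_1(q)=2J_{18,45}/J_3$, which is \eqref{11-T0T1-result}, and the exponents $\equiv 1\pmod 3$ yield $T_{-1}(q)=(qJ_{12,45}+q^4J_{3,45})/J_3$, which is \eqref{11-T2-result}; the class $\equiv 2\pmod 3$ is empty on both sides, serving as a consistency check.

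There is no genuine obstacle here; the points requiring care are bookkeeping points. The main one is that, unlike the $S_r$, the summand of $T_r$ is not symmetric in $i$ and $j$ because of the linear term $-2i-j$, so the dissection cannot separate $T_0$ from $T_1$ individually — which is precisely why the lemma records only their sum $T_0(q)+T_1(q)$. One should also verify the trivial-looking congruence $-2i-j\equiv i-j\pmod 3$ and the claim that every eta-quotient factor on the right is supported on multiples of $q^3$, since the entire argument rests on these two facts.
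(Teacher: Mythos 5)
Your proposal is correct and follows essentially the same route as the paper: both reduce the lemma to the $3$-dissections of the Vlasenko--Zwegers identities \eqref{conj-10-2} and \eqref{conj-10-1} via the congruences $2i^2+2ij+2j^2\equiv 2(i-j)^2\pmod 3$ and $2i^2+2ij+2j^2-2i-j\equiv 2(i-j)^2+(i-j)\pmod 3$, together with the $i\leftrightarrow j$ symmetry giving $S_1=S_{-1}$. Your explicit verification that each product on the right-hand side is supported on powers of $q^3$ is a point the paper leaves implicit, but it is the same argument.
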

\begin{proof}
Interchanging $i$ with $j$ it is easy to see that $S_1(q)=S_{-1}(q)$.

Note that
\begin{align}
&2i^2+2ij+2j^2=2(i-j)^2+6ij \nonumber \\
&\equiv 2(i-j)^2 \equiv \left\{\begin{array}{ll}
0 \pmod{3} & i-j\equiv 0 \pmod{3},\\
-1 \pmod{3} & i-j\equiv 1,-1 \pmod{3}. \end{array}
\right.
\end{align}
From \eqref{conj-10-2} we obtain \eqref{11-S0-result} and \eqref{11-S1-result}.

Note that
\begin{align}
    &2i^2+2ij+2j^2-2i-j=2(i-j)^2+(i-j)+6ij-3i \nonumber \\
    &\equiv 2(i-j)^2+(i-j)\equiv \left\{\begin{array}{ll}
0 \pmod{3} & i-j\equiv 0,1 \pmod{3},\\
1 \pmod{3} & i-j\equiv -1 \pmod{3}. \end{array}
\right.
\end{align}
From \eqref{conj-10-1} we obtain   \eqref{11-T0T1-result} and \eqref{11-T2-result}.
\end{proof}

\begin{proof}[Proof of Theorem \ref{thm-lift-11}]
We define
\begin{align}
F(u,v,w;q^4):=\sum_{i,j,k\geq 0} \frac{u^iv^jw^kq^{3i^2+3j^2+3k^2-2ij-2ik-2jk}}{(q^4;q^4)_i(q^4;q^4)_j(q^4;q^4)_k}.
\end{align}
By \eqref{JTP} and \eqref{Euler1} we have
\begin{align}\label{integration2}
&F(u,v,w;q^4)=
\sum_{i,j,k\geq 0} \frac{u^iv^jw^kq^{(i-j-k)^2+2(j-k)^2+2i^2}}{(q^4;q^4)_i(q^4;q^4)_j(q^4;q^4)_k} \nonumber\\
&=\mathrm{CT}_y \mathrm{CT}_z \sum_{i\geq 0}\frac{(uz)^iq^{2i^2}}{(q^4;q^4)_i}\sum_{j\geq 0}\frac{(vy/z)^j}{(q^4;q^4)_j}\sum_{k\geq 0}\frac{(w/yz)^k}{(q^4;q^4)_k}\sum_{m=-\infty}^{\infty}z^{-m}q^{m^2}\sum_{n=-\infty}^{\infty}y^{-n}q^{2n^2} \nonumber\\
&=\mathrm{CT}_y\mathrm{CT}_z \frac{(-q^2uz;q^4)_{\infty}(-qz,-q/z,q^2;q^2)_{\infty}(-q^2y,-q^2/y,q^4;q^4)_{\infty}}{(vy/z,w/yz;q^4)_{\infty}} \nonumber\\
&= \mathrm{CT}_z (-q^2uz,q^4;q^4)_{\infty}(-qz,-q/z,q^2;q^2)_{\infty} \nonumber\\
&\quad \times \mathrm{CT}_y \sum_{i\geq 0}\frac{(-q^2z/v;q^4)_i}{(q^4;q^4)_i}(vy/z)^i\sum_{j\geq 0}\frac{(-q^2z/w;q^4)_j}{(q^4;q^4)_j}(w/yz)^j \quad \text{(by (\ref{q-binomial}))} \nonumber\\
&=\mathrm{CT}_z (-q^2uz,q^4;q^4)_{\infty}(-qz,-q/z,q^2;q^2)_{\infty}\sum_{i\geq 0}\frac{(-q^2z/v,-q^2z/w;q^4)_i}{(q^4,q^4;q^4)_i}(\frac{vw}{z^2})^i  \nonumber\\
&=\mathrm{CT}_z (-q^2uz,q^4;q^4)_{\infty}(-qz,-q/z,q^2;q^2)_{\infty}\frac{(-q^2v/z,-q^2w/z;q^4)_{\infty}}{(vw/z^2,q^4;q^4)_{\infty}} \quad \text{(by (\ref{Gauss}))} \nonumber\\
&=\mathrm{CT}_z \frac{(-q^2uz,-q^2v/z,-q^2w/z;q^4)_{\infty}(-qz,-q/z,q^2;q^2)_{\infty}}{(vw/z^2;q^4)_{\infty}}.
\end{align}

(1) By \eqref{integration2} we have
\begin{align}
&(q^4;q^4)_\infty F(1,1,1;q^4) \nonumber \\
&=\mathrm{CT}_z \frac{(-q^2/z;q^4)_\infty}{(1/z^2;q^4)_{\infty}}(-q^2z,-q^2/z,q^4;q^4)_{\infty}(-qz,-q/z,q^2;q^2)_{\infty} \nonumber\\
&= \mathrm{CT}_z \sum_{i=0}^\infty \frac{z^{-i}q^{2i^2}}{(q^4;q^4)_i} \sum_{j=0}^\infty \frac{z^{-2j}}{(q^4;q^4)_j} \sum_{m=-\infty}^\infty z^{-m}q^{2m^2} \sum_{n=-\infty}^\infty z^nq^{n^2} \quad \nonumber \\
&\qquad \qquad \qquad \qquad \qquad \qquad \text{(by \eqref{JTP} and \eqref{Euler1})}  \nonumber \\
&=\sum_{i,j\geq 0} \frac{q^{2i^2}}{(q^4;q^4)_i(q^4;q^4)_j} \sum_{m=-\infty}^\infty q^{2m^2+(m+i+2j)^2} \nonumber \\
&=\sum_{i,j\geq 0} \frac{q^{2i^2+(i+2j)^2}}{(q^4;q^4)_i(q^4;q^4)_j} \sum_{m=-\infty}^\infty q^{3m^2+2m(i+2j)} \nonumber \\
&=\sum_{i,j\geq 0} \frac{q^{\frac{4}{3}(2i^2+2ij+2j^2)}}{(q^4;q^4)_i(q^4;q^4)_j} \sum_{m=-\infty}^\infty q^{3(m+\frac{1}{3}(i+2j))^2} \label{11-key-step-1} \\
&=\sum_{r=-1}^ 1 \sum_{n\geq 0} \sum_{\begin{smallmatrix}
    i,j\geq 0 \\ i+2j=3n+r
\end{smallmatrix}}  \frac{q^{\frac{4}{3}(2i^2+2ij+2j^2)}}{(q^4;q^4)_i(q^4;q^4)_j} \sum_{m=-\infty}^\infty q^{3(m+\frac{1}{3}(3n+r))^2} \nonumber \\
&=\sum_{r=-1}^ 1 \sum_{n\geq 0} \sum_{\begin{smallmatrix}
    i,j\geq 0 \\ i+2j=3n+r
\end{smallmatrix}}  \frac{q^{\frac{4}{3}(2i^2+2ij+2j^2)}}{(q^4;q^4)_i(q^4;q^4)_j} \sum_{m=-\infty}^\infty q^{3(m+\frac{1}{3}r)^2} \nonumber \\
&=\sum_{r=-1}^ 1 q^{\frac{1}{3}r^2} \sum_{\begin{smallmatrix}
    i,j\geq 0 \\ i-j\equiv r \!\!\!\! \pmod{3}
\end{smallmatrix}}  \frac{q^{\frac{4}{3}(2i^2+2ij+2j^2)}}{(q^4;q^4)_i(q^4;q^4)_j} \sum_{m=-\infty}^\infty q^{3m^2+2mr} \nonumber \\
&=\sum_{r=-1}^ 1 q^{\frac{1}{3}r^2} (-q^{3-2r},-q^{3+2r},q^6;q^6)_\infty \sum_{\begin{smallmatrix}
    i,j\geq 0 \\ i-j\equiv r \!\!\!\! \pmod{3}
\end{smallmatrix}}  \frac{q^{\frac{4}{3}(2i^2+2ij+2j^2)}}{(q^4;q^4)_i(q^4;q^4)_j}  \nonumber \\
&=(-q^3,-q^3,q^6;q^6)_\infty S_0(q^{\frac{4}{3}}) +q^{\frac{1}{3}}(-q,-q^5,q^6;q^6)_\infty (S_1(q^{\frac{4}{3}})+S_{-1}(q^{\frac{4}{3}})). \label{11-proof-1}
\end{align}
Here for the last second equality we used \eqref{JTP}. Substituting \eqref{11-S0-result} and \eqref{11-S1-result} with $q$ replaced by $q^{4/3}$ into \eqref{11-proof-1}, we obtain \eqref{eq-thm-11-1}.

(2)  By \eqref{integration2} we have
\begin{align}
&(q^4;q^4)_\infty F(q^{-2},q^{2},q^{-2};q^4) \nonumber \\
&=\mathrm{CT}_z \frac{(-1/z;q^4)_\infty}{(1/z^2;q^4)_{\infty}}(-z,-q^4/z,q^4;q^4)_{\infty}(-qz,-q/z,q^2;q^2)_{\infty} \nonumber\\
&= \mathrm{CT}_z \sum_{i=0}^\infty \frac{z^{-i}q^{2i^2-2i}}{(q^4;q^4)_i} \sum_{j=0}^\infty \frac{z^{-2j}}{(q^4;q^4)_j} \sum_{m=-\infty}^\infty z^{-m}q^{2m^2+2m} \sum_{n=-\infty}^\infty z^nq^{n^2}  \quad \nonumber \\
&\qquad \qquad \qquad \qquad \qquad \qquad \text{(by \eqref{JTP} and \eqref{Euler1})}  \nonumber \\
&=\sum_{i,j\geq 0} \frac{q^{2i^2-2i}}{(q^4;q^4)_i(q^4;q^4)_j} \sum_{m=-\infty}^\infty q^{2m^2+2m+(m+i+2j)^2} \nonumber \\
&=\sum_{i,j\geq 0} \frac{q^{2i^2-2i+(i+2j)^2}}{(q^4;q^4)_i(q^4;q^4)_j} \sum_{m=-\infty}^\infty q^{3m^2+2m(i+2j+1)} \nonumber \\
&=q^{-\frac{1}{3}}\sum_{i,j\geq 0} \frac{q^{\frac{4}{3}(2i^2+2ij+2j^2-2i-j)}}{(q^4;q^4)_i(q^4;q^4)_j} \sum_{m=-\infty}^\infty q^{3(m+\frac{1}{3}(i+2j+1))^2} \label{11-key-step-2} \\
&=q^{-\frac{1}{3}}\sum_{r=-1}^ 1 \sum_{n\geq 0} \sum_{\begin{smallmatrix}
    i,j\geq 0 \\ i+2j=3n+r-1
\end{smallmatrix}}  \frac{q^{\frac{4}{3}(2i^2+2ij+2j^2-2i-j)}}{(q^4;q^4)_i(q^4;q^4)_j} \sum_{m=-\infty}^\infty q^{3(m+\frac{1}{3}(3n+r))^2} \nonumber \\
&=q^{-\frac{1}{3}}\sum_{r=-1}^ 1 \sum_{n\geq 0} \sum_{\begin{smallmatrix}
    i,j\geq 0 \\ i+2j=3n+r-1
\end{smallmatrix}}  \frac{q^{\frac{4}{3}(2i^2+2ij+2j^2-2i-j)}}{(q^4;q^4)_i(q^4;q^4)_j} \sum_{m=-\infty}^\infty q^{3(m+\frac{1}{3}r)^2} \nonumber \\
&=q^{-\frac{1}{3}}\sum_{r=-1}^ 1 q^{\frac{1}{3}r^2} \sum_{\begin{smallmatrix}
    i,j\geq 0 \\ i-j\equiv r-1 \!\!\!\! \pmod{3}
\end{smallmatrix}}  \frac{q^{\frac{4}{3}(2i^2+2ij+2j^2-2i-j)}}{(q^4;q^4)_i(q^4;q^4)_j} \sum_{m=-\infty}^\infty q^{3m^2+2mr} \nonumber \\
&=q^{-\frac{1}{3}}\sum_{r=-1}^ 1 q^{\frac{1}{3}r^2} (-q^{3-2r},-q^{3+2r},q^6;q^6)_\infty \sum_{\begin{smallmatrix}
    i,j\geq 0 \\ i-j\equiv r-1 \!\!\!\! \pmod{3}
\end{smallmatrix}}  \frac{q^{\frac{4}{3}(2i^2+2ij+2j^2-2i-j)}}{(q^4;q^4)_i(q^4;q^4)_j}   \nonumber \\
&=q^{-\frac{1}{3}}(-q^3,-q^3,q^6;q^6)_\infty T_{-1}(q^{\frac{4}{3}}) +(-q,-q^5,q^6;q^6)_\infty (T_0(q^{\frac{4}{3}})+T_{1}(q^{\frac{4}{3}})). \label{11-proof-2}
\end{align}
Here for the last second equality we used \eqref{JTP}.
Substituting \eqref{11-T0T1-result} and \eqref{11-T2-result} with $q$ replaced by $q^{4/3}$ into \eqref{11-proof-2}, we obtain \eqref{eq-thm-11-2}.
\end{proof}
We remark here that the involvement of  \eqref{conj-10-2} and \eqref{conj-10-1} is totally unexpected before we arrive at \eqref{11-key-step-1} and \eqref{11-key-step-2}.

\section{Applications and Some Other Nahm Sums}\label{sec-applictaion}
Section \ref{sec-exam1} provides two general matrices $\widetilde{A}$ and $\widetilde{A}^\star$  for which four modular triples can be found (see \eqref{exam1-lift} and \eqref{A-exam1-lift-dual}). Besides the data listed there, if we set some specific values for $a$, we may find some extra vectors and scalars so that they  form  modular triples with these matrices. For instance, if we set $a=2$ in \eqref{exam1-lift}, we obtain the seventh matrix in Zagier's rank three examples \cite[Table 3]{Zagier}. There are seven choices of vectors $B$ so that $(\widetilde{A},B,C)$ is modular for some suitable $C$. See \cite[Theorem 4.8]{Wang2024} for the corresponding identities. We will not consider its dual here since the dual examples of all of Zagier's rank three examples will be discussed in a forthcoming paper as promised in \cite{Wang-rank3}. Instead, we give another specific interesting example corresponding to $a=3/2$ to illustrate the phenomenon.

\subsection{A special matrix and the corresponding Nahm sums}
If we set $a=3/2$ in \eqref{eq-exam1-original} and \eqref{exam1-lift}, we obtain
\begin{align}\label{eq-A-32}
A=\begin{pmatrix}
    3/2 & -1/2 \\ -1/2 & 3/2
\end{pmatrix}, \quad \widetilde{A}=\begin{pmatrix} 3/2 & 1/2 & 1 \\  1/2 & 3/2 & 1 \\ 1 & 1 & 2 \end{pmatrix}.
\end{align}
Let
\begin{align}\label{F-defn-special}
F(u,v,w;q^4):=\sum_{i,j,k\geq 0} \frac{u^iv^jw^kq^{3i^2+3j^2+4k^2+2ij+4ik+4jk}}{(q^4;q^4)_i(q^4;q^4)_j(q^4;q^4)_k}.
\end{align}
Recall the identity \eqref{eq-lift-id}, we have
\begin{align}
F(q^{b_1},q^{b_2},q^{b_1+b_2};q^4)=\sum_{i,j\geq 0} \frac{q^{3i^2-2ij+3j^2+b_1i+b_2j}}{(q^4;q^4)_i(q^4;q^4)_j}. \label{eq-thm-32-relation}
\end{align}
The modular triples in \eqref{exam1-lift} with $a=3/2$ give the first four modular triples  $(\widetilde{A},B,C)$ in Table \ref{tab:32-triple}.
\begin{table}[htbp]
    \centering
    \begin{tabular}{ccccccc}
    \hline
        $B$ & $\begin{pmatrix} c \\ -c \\ 0 \end{pmatrix}$ & $\begin{pmatrix} -1/2 \\ -1/2 \\ -1 \end{pmatrix}$ & $\begin{pmatrix} 1/4 \\ 3/4 \\ 1 \end{pmatrix}$ & $\begin{pmatrix} 3/4 \\ 1/4 \\ 1 \end{pmatrix}$  & $\begin{pmatrix} -1/4 \\ -3/4 \\ -1/2 \end{pmatrix}$ & $\begin{pmatrix} -3/4 \\ -1/4 \\ -1/2 \end{pmatrix}$  \\
        $C$ & ${c^2}/{3}-{1}/{24}$ & ${1}/{24}$   & ${7}/{48}$ & ${7}/{48}$   & ${1}/{48}$  &  $1/48$   \\
        \hline
        $B$ &  $\begin{pmatrix} 0 \\ -1/2 \\ 0 \end{pmatrix}$ &  $\begin{pmatrix} -1/2 \\ 0 \\ 0 \end{pmatrix}$ & $\begin{pmatrix} 1/4 \\ -1/4 \\ 1/2 \end{pmatrix}$ & $\begin{pmatrix} -1/4 \\ 1/4 \\ 1/2 \end{pmatrix}$ & $\begin{pmatrix} 1/2 \\ 1/2 \\ 0\end{pmatrix}$ &
$\begin{pmatrix} 1 \\ 1 \\ 1\end{pmatrix}$ \\
        $C$ & $-1/96$  & $-1/96$  & $1/48$ & $1/48$  &  $1/24$  & $7/24$   \\
        \hline
    \end{tabular}
    \caption{Modular triples associated with the matrix $\widetilde{A}$ in \eqref{eq-A-32}.}
    \label{tab:32-triple}
\end{table}

The corresponding Nahm sum identities associated with the first three triples inherited from \eqref{VZ-id-1}--\eqref{VZ-id-3} are
\begin{align}
     \sum_{i,j,k\geq 0} \frac{q^{3i^2+3j^2+4k^2+2ij+4ik+4jk+ci-cj}}{(q^4;q^4)_i(q^4;q^4)_j(q^4;q^4)_k}&=\frac{(-q^{3+c},-q^{3-c},q^6;q^6)_\infty}{(q^4;q^4)_\infty}, \label{lift-VZ-1} \\
     \sum_{i,j,k\geq 0} \frac{q^{3i^2+3j^2+4k^2+4ij+4ik+2jk-2i-2j-4k}}{(q^4;q^4)_i(q^4;q^4)_j(q^4;q^4)_k}&=2\frac{(-q,-q^5,q^6;q^6)_\infty}{(q^4;q^4)_\infty}, \label{lift-VZ-2} \\
      \sum_{i,j,k\geq 0} \frac{q^{3i^2+3j^2+4k^2+4ij+4ik+2jk+i+3j+4k}}{(q^4;q^4)_i(q^4;q^4)_j(q^4;q^4)_k}&=\frac{(-q^6,-q^6,q^6;q^6)_\infty}{(q^4;q^4)_\infty}. \label{lift-VZ-3}
\end{align}
Interchanging $i$ with $j$ in \eqref{lift-VZ-3} yields the identity for the fourth triple.

 As recorded in Table \ref{tab:32-triple}, we find eight more possible modular triples. Due to the symmetry of $i$ and $j$ in the quadratic form associated with $A$, there are essentially five different Nahm sums to be considered. The modularity of three of them follow from the cases $u=1,q^{-1},q$ of the following theorem.
\begin{theorem}\label{thm-2}
We have
\begin{align}
   \sum_{i,j,k\geq 0} \frac{q^{3i^2+3j^2+4k^2+2ij+4ik+4jk-2j}u^{i+j+2k}}{(q^4;q^4)_i(q^4;q^4)_j(q^4;q^4)_k}= (-uq;q^2)_{\infty}.
\end{align}
\end{theorem}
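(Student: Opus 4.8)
The left‑hand side is the specialization $F(u,uq^{-2},u^{2};q^{4})$ of the rank‑three sum \eqref{F-defn-special}, and it is \emph{not} of the shape covered by the lift identity \eqref{eq-lift-id}, so a direct argument is needed. The key first step is to notice that the quadratic form in the exponent collapses: expanding $(i+j+2k)^{2}$ one checks
\[
3i^{2}+3j^{2}+4k^{2}+2ij+4ik+4jk=2i^{2}+2j^{2}+(i+j+2k)^{2}.
\]
Setting $n:=i+j+2k$, the left‑hand side of the theorem becomes $\sum_{n\ge 0}u^{n}q^{n^{2}}G(n)$ with
\[
G(n):=\sum_{\substack{i,j,k\ge 0\\ i+j+2k=n}}\frac{q^{2i^{2}+2j^{2}-2j}}{(q^{4};q^{4})_{i}(q^{4};q^{4})_{j}(q^{4};q^{4})_{k}}.
\]
Thus the theorem reduces to proving $G(n)=1/(q^{2};q^{2})_{n}$: granting this, the second of Euler's identities \eqref{Euler1} (with $q\mapsto q^{2}$, $z\mapsto uq$) gives $\sum_{n\ge 0}u^{n}q^{n^{2}}/(q^{2};q^{2})_{n}=(-uq;q^{2})_{\infty}$, as desired.

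To evaluate $G(n)$ I would pass to the generating function $\sum_{n\ge 0}G(n)z^{n}$. Since the factor $q^{2i^{2}+2j^{2}-2j}$ splits as a product over the three summation indices, this sum factors, and both of Euler's identities \eqref{Euler1} applied with base $q^{4}$ yield
\begin{align*}
\sum_{n\ge 0}G(n)z^{n}&=\Bigl(\sum_{i\ge 0}\frac{q^{2i^{2}}z^{i}}{(q^{4};q^{4})_{i}}\Bigr)\Bigl(\sum_{j\ge 0}\frac{q^{2j^{2}-2j}z^{j}}{(q^{4};q^{4})_{j}}\Bigr)\Bigl(\sum_{k\ge 0}\frac{z^{2k}}{(q^{4};q^{4})_{k}}\Bigr)\\
&=\frac{(-q^{2}z;q^{4})_{\infty}(-z;q^{4})_{\infty}}{(z^{2};q^{4})_{\infty}}.
\end{align*}
Now the elementary factorizations $(z^{2};q^{4})_{\infty}=(z;q^{2})_{\infty}(-z;q^{2})_{\infty}$ and $(-z;q^{2})_{\infty}=(-z;q^{4})_{\infty}(-q^{2}z;q^{4})_{\infty}$ make the numerator cancel completely, leaving $\sum_{n\ge 0}G(n)z^{n}=1/(z;q^{2})_{\infty}=\sum_{n\ge 0}z^{n}/(q^{2};q^{2})_{n}$ by the first of Euler's identities \eqref{Euler1}. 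Comparing coefficients of $z^{n}$ gives $G(n)=1/(q^{2};q^{2})_{n}$, which completes the proof.

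I expect no serious obstacle here: the whole argument hinges on the quadratic‑form identity $3i^{2}+3j^{2}+4k^{2}+2ij+4ik+4jk=2i^{2}+2j^{2}+(i+j+2k)^{2}$, which is what singles out $n=i+j+2k$ as the right variable and triggers the collapse; everything afterward is a telescoping of infinite products assembled from Euler's $q$‑exponential identities, with only the mild routine care of justifying the interchange of the (absolutely convergent, for $|q|<1$ and small $|z|$) summations.
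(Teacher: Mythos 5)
Your proposal is correct and is essentially the paper's own argument in different packaging: the paper also rewrites the exponent as $2i^{2}+2j^{2}+(i+j+2k)^{2}$, forms the same three Euler products $(-q^{2}uz,-uz;q^{4})_{\infty}/(u^{2}z^{2};q^{4})_{\infty}=1/(uz;q^{2})_{\infty}$, and finishes with the second Euler identity. The only difference is that the paper enforces $m=i+j+2k$ via a constant-term extraction against the Jacobi theta function $\sum_{m}z^{-m}q^{m^{2}}$, whereas you extract the coefficient $G(n)$ of $z^{n}$ directly — the same computation in equivalent form.
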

\begin{proof}
By \eqref{Euler1} and \eqref{JTP} we have
\begin{align}\label{integration3}
&F(u,v,w;q^4)=
\sum_{i,j,k\geq 0} \frac{u^iv^jw^kq^{2i^2+2j^2+(i+j+2k)^2}}{(q^4;q^4)_i(q^4;q^4)_j(q^4;q^4)_k} \nonumber \\
&=\mathrm{CT}_z \sum_{i\geq 0}\frac{(uz)^iq^{2i^2}}{(q^4;q^4)_i} \sum_{j\geq 0}\frac{(vz)^jq^{2j^2}}{(q^4;q^4)_j}\sum_{k\geq 0}\frac{(wz^2)^k}{(q^4;q^4)_k}\sum_{m=-\infty}^{\infty}z^{-m}q^{m^2} \nonumber \\
&=\mathrm{CT}_z \frac{(-q^2uz,-q^2vz;q^4)_{\infty}(-qz,-q/z,q^2;q^2)_{\infty}}{(wz^2;q^4)_{\infty}}.
\end{align}
By \eqref{integration3} we have
\begin{align*}
&F(u,q^{-2}u,u^{2};q^4)=\mathrm{CT}_z \frac{(-q^2uz,-uz;q^4)_{\infty}(-qz,-q/z,q^2;q^2)_{\infty}}{(u^2z^2;q^4)_{\infty}}\nonumber \\
&=\mathrm{CT}_z \frac{(-uz;q^2)_{\infty}(-qz,-q/z,q^2;q^2)_{\infty}}{(u^2z^2;q^4)_{\infty}}
=\mathrm{CT}_z \frac{(-qz,-q/z,q^2;q^2)_{\infty}}{(uz;q^2)_{\infty}}  \nonumber \\
&=\mathrm{CT}_z \sum_{i=0}^\infty \frac{u^iz^i}{(q^2;q^2)_i} \sum_{m=-\infty}^{\infty}z^{-m}q^{m^2}  \nonumber \\
&=\sum_{i=0}^\infty \frac{u^iq^{i^2}}{(q^2;q^2)_i}
=(-uq;q^2)_{\infty}.  \quad \text{(by \eqref{Euler1})} \qedhere
\end{align*}
\end{proof}

The last two choices of $(B,C)$ in Table \ref{tab:32-triple} were found by applying the dual operator to the Nahm sums in Theorem \ref{thm-2-Ex1-3/2-in} below. Zagier's duality conjecture  implies that  $q^{1/6}F(q^2,q^2,1;q^4)$ and $q^{7/6}F(q^4,q^4,q^4;q^4)$ are likely to be modular. However, we find that they can be expressed as sums of two modular forms of weights 0 and 1, respectively. Hence $q^{C}F(q^2,q^2,1;q^4)$ and $q^{C}F(q^4,q^4,q^4;q^4)$  are not modular for any $C$.
\begin{theorem}\label{thm-nonmodular}
We have
\begin{align}
   &\sum_{i,j,k\geq 0} \frac{q^{3i^2+3j^2+4k^2+2ij+4ik+4jk+2i+2j}}{(q^4;q^4)_i(q^4;q^4)_j(q^4;q^4)_k}= \frac{1}{3}\frac{J_1^2J_4}{J_2}+\frac{2}{3}\frac{J_2^2J_3J_{12}}{J_1J_4^2J_6}, \label{nonmodular-id-1}\\
   &\sum_{i,j,k\geq 0} \frac{q^{3i^2+3j^2+4k^2+2ij+4ik+4jk+4i+4j+4k}}{(q^4;q^4)_i(q^4;q^4)_j(q^4;q^4)_k}= -\frac{1}{3}q^{-1}\frac{J_1^2J_4}{J_2}+\frac{1}{3}q^{-1}\frac{J_2^2J_3J_{12}}{J_1J_4^2J_6}. \label{nonmodular-id-2}
\end{align}
\end{theorem}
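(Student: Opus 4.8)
The plan is to reuse the contour/constant-term machinery developed in Sections \ref{sec-exam1}--\ref{sec-exam11}, starting from the representation \eqref{integration3} already established in the proof of Theorem \ref{thm-2}. With $F$ as in \eqref{F-defn-special},
\[
F(u,v,w;q^4)=\mathrm{CT}_z\,\frac{(-q^2uz,-q^2vz;q^4)_\infty\,(-qz,-q/z,q^2;q^2)_\infty}{(wz^2;q^4)_\infty},
\]
so \eqref{nonmodular-id-1} asks us to compute $F(q^2,q^2,1;q^4)=\mathrm{CT}_z\,\dfrac{(-q^4z;q^4)_\infty^2\,(-qz,-q/z,q^2;q^2)_\infty}{(z^2;q^4)_\infty}$ and \eqref{nonmodular-id-2} asks for $F(q^4,q^4,q^4;q^4)=\mathrm{CT}_z\,\dfrac{(-q^6z;q^4)_\infty^2\,(-qz,-q/z,q^2;q^2)_\infty}{(q^4z^2;q^4)_\infty}$. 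A preliminary observation worth recording is that, unlike in \eqref{eq-thm-32-relation}, here $w\neq uv$ (as powers of $q$), so these are \emph{not} lifted rank-two Nahm sums -- this is precisely the mechanism behind the failure of modularity. Since $(z^2;q^4)_\infty=(1-z^2)(q^4z^2;q^4)_\infty$ one has the clean identity $F(u,v,1;q^4)-F(u,v,q^4;q^4)=\mathrm{CT}_z\,\dfrac{z^2\,(-q^2uz,-q^2vz;q^4)_\infty\,(-qz,-q/z,q^2;q^2)_\infty}{(z^2;q^4)_\infty}$, and by \eqref{eq-thm-32-relation} the subtracted term $F(q^2,q^2,q^4;q^4)=\sum_{i,j\geq 0}q^{3i^2-2ij+3j^2+2i+2j}/\big((q^4;q^4)_i(q^4;q^4)_j\big)$ is a genuine double sum; so one may reduce the problem to evaluating this double sum together with a correction contour integral.

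Next I would extract the constant term. Writing $(z^2;q^4)_\infty=(z;q^2)_\infty(-z;q^2)_\infty$ makes the integrand have only simple poles; the real nuisance is the squared factor $(-q^2uz;q^4)_\infty^2$, which does not fit the template of \eqref{Eq. (4.10.5)}. I would deal with it by re-expanding one of the two equal copies through Euler's identity \eqref{Euler1}, converting it into a new summation index and leaving a bona fide $P(z)$ in $z$; the remaining constant term is then either handled by \eqref{Eq. (4.10.5)}, or -- after writing out $(z^2;q^4)_\infty$ and using \eqref{JTP} and \eqref{Euler1} -- collapsed to a low-dimensional theta-weighted sum in the spirit of \eqref{11-key-step-1}--\eqref{11-proof-1}. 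The modulus-$12$ products $J_3,J_6,J_{12}$ on the right-hand sides signal that a dissection of this sum is the decisive step: one residue class should reduce, by \eqref{JTP}, to a multiple of $J_4\cdot(J_1^2/J_2)=J_4\sum_{n\in\mathbb{Z}}(-1)^nq^{n^2}$ (the weight-one piece), and the complementary class(es) to a multiple of the weight-zero eta quotient $J_2^2J_3J_{12}/(J_1J_4^2J_6)$, most plausibly by invoking one of the modulus-$6$ or modulus-$12$ identities of Section \ref{sec-pre} such as \eqref{Entry 4.2.8+4.2.9} or \eqref{Entry 4.2.11}, possibly after a change of base; this is analogous to the unexpected intervention of Example $10$ inside the proof of Theorem \ref{thm-lift-11}.

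The main obstacle will be this identification step. Because of the squared factor the reduction to a tractable sum is less automatic than in the earlier sections, and one must both determine the precise dissection (the coefficients $\tfrac13,\tfrac23$, resp.\ $\pm\tfrac13q^{-1}$, hint that three residue classes occur with two of them coinciding, as with $S_1=S_{-1}$ in Lemma \ref{lem-3-dissection}) and correctly match each class with a known single-sum identity. Once \eqref{nonmodular-id-1} and \eqref{nonmodular-id-2} are in hand, the non-modularity assertion is immediate: $q^{1/6}J_2^2J_3J_{12}/(J_1J_4^2J_6)$ is a modular form of weight $0$ while $q^{1/6}J_1^2J_4/J_2$ has weight $1$ (and similarly for the $q^{7/6}$-normalisation of \eqref{nonmodular-id-2}), so no power of $q$ can render either of the two triple sums equal to a single modular form.
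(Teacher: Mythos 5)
Your proposal correctly identifies the shape of the answer (a weight-one piece $J_1^2J_4/J_2$ plus a weight-zero piece, hence non-modularity), and the constant-term representation you start from is valid. But what you have written is a plan, not a proof: the decisive step --- actually evaluating $\mathrm{CT}_z$ and matching it to the right-hand sides --- is left as a list of things that ``should'' or ``most plausibly'' work, and you yourself flag it as the main obstacle. None of the concrete routes you sketch closes this gap. The reduction $F(q^2,q^2,1;q^4)-F(q^2,q^2,q^4;q^4)=\mathrm{CT}_z\,z^2(\cdots)$ is algebraically correct but makes no progress, because the subtracted double sum corresponds to the vector $(1/2,1/2)^\mathrm{T}$, which is \emph{not} among the modular cases \eqref{eq-exam1-original} of Example~1, so neither piece is known. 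Expanding one copy of the squared factor $(-q^4z;q^4)_\infty^2$ by Euler's identity reintroduces a summation index, so you do not land on a single integrand of the form required by \eqref{Eq. (4.10.5)} but on a sum of such integrals, i.e.\ essentially back to a multiple sum. And the guess that the coefficients $\tfrac13,\tfrac23$ arise from a $3$-dissection into residue classes (as in Lemma \ref{lem-3-dissection}) does not match the actual structure: also, the claim that $w\neq uv$ is ``precisely the mechanism'' for non-modularity is contradicted by Theorem \ref{thm-2} and by most entries of Table \ref{tab:32-triple}, which are modular with $B$ not of lifted form.

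For comparison, the paper proceeds quite differently. It splits the sum over $n=i+j$ into even and odd parts and runs a chain of Bailey-pair moves --- lifting $a\to aq$ via \eqref{eq-BP-lift}, applying \eqref{BP-S1}, then descending via Lemma \ref{lem-BP-down} --- whose net effect is to produce $\alpha$-sequences carrying a \emph{linear} factor $(2n+1)$ resp.\ $(n+1)$. This collapses $F(q^2,q^2,1;q^4)$ to $\frac{1}{(q^4;q^4)_\infty}\sum_{k\in\mathbb{Z}}(k+1)q^{3k^2+2k}$, and the mixed weights $0$ and $1$ come from the quoted identity \eqref{Wang-id-1} for this linear-coefficient theta sum, not from any dissection. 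The second identity \eqref{nonmodular-id-2} then follows at once from the shift relation \eqref{nonmodular-relation} combined with the product evaluation \eqref{lift-VZ-1} --- a step absent from your outline. To make your approach work you would need to supply the actual evaluation of the constant term and the identity playing the role of \eqref{Wang-id-1}; as it stands, the core of the argument is missing.
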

\begin{proof}
From the definition \eqref{F-defn-special} we have
\begin{align}\label{eq-proof-nonmodular-F}
&F(u,u,w;q^4)=\sum_{n,k\geq 0} \sum_{j=0}^n \frac{u^nw^kq^{2n^2+(n-2j)^2+4k^2+4nk}}{(q^4;q^4)_{n-j}(q^4;q^4)_j(q^4;q^4)_k} \nonumber \\
&=S_0(u,w;q^4)+S_1(u,w;q^4).
\end{align}
Here $S_0(u,w;q^4)$ and $S_1(u,w;q^4)$ correspond to the sum with even and odd values of $n$, respectively. That is,
\begin{align}
&S_0(u,w;q^4)=\sum_{n,k\geq 0} \sum_{j=0}^{2n} \frac{u^{2n}w^kq^{8n^2+4(n-j)^2+4k^2+8nk}}{(q^4;q^4)_{2n-j}(q^4;q^4)_j(q^4;q^4)_k}, \\
&S_1(u,w;q^4)=\sum_{n,k\geq 0} \sum_{j=0}^{2n+1} \frac{u^{2n+1}w^kq^{8n^2+8n+4(n-j)^2+4(n-j)+4k^2+8nk+4k+3}}{(q^4;q^4)_{2n-j+1}(q^4;q^4)_j(q^4;q^4)_k}.
\end{align}
We have
\begin{align}
&S_0(u,w;q^4)=\sum_{n,k\geq 0} \sum_{j=0}^{2n} \frac{u^{2n}w^kq^{4n^2+4(n-j)^2+4(n+k)^2}}{(q^4;q^4)_{2n-j}(q^4;q^4)_j(q^4;q^4)_k} \nonumber \\
&=\sum_{n,k\geq 0} \frac{u^{2n}w^kq^{4n^2+4(n+k)^2}}{(q^4;q^4)_k}\Big(\frac{1}{(q^4;q^4)_n^2}+2\sum_{r=1}^n \frac{q^{4r^2}}{(q^4;q^4)_{n-r}(q^4;q^4)_{n+r}}  \Big) \nonumber \\
&=\sum_{m=0}^\infty \sum_{n=0}^m \frac{u^{2n}w^{m-n}q^{4n^2+4m^2}}{(q^4;q^4)_{m-n}} \Big(\frac{1}{(q^4;q^4)_n^2}+2\sum_{r=1}^n \frac{q^{4r^2}}{(q^4;q^4)_{n-r}(q^4;q^4)_{n+r}}\Big). \label{nonmodular-S0-start}
\end{align}
Similarly, we have
\begin{align}
&S_1(u,w;q^4)=\sum_{n,k\geq 0} \sum_{j=0}^{2n+1} \frac{u^{2n+1}w^kq^{4n^2+4n+4(n-j)^2+4(n-j)+4(n+k)^2+4(n+k)+3}}{(q^4;q^4)_{2n-j+1}(q^4;q^4)_j(q^4;q^4)_k} \nonumber \\
&=2\sum_{n,k\geq 0} \frac{u^{2n+1}w^kq^{4n^2+4n+3+4(n+k)^2+4(n+k)}}{(q^4;q^4)_k}\sum_{r=0}^n \frac{q^{4r^2+4r}}{(q^4;q^4)_{n-r}(q^4;q^4)_{n+r+1}} \nonumber \\
&=2\sum_{m=0}^\infty \sum_{n=0}^m \frac{u^{2n+1}w^{m-n}q^{4n^2+4n+3+4m^2+4m}}{(q^4;q^4)_{m-n}} \sum_{r=0}^n \frac{q^{4r^2+4r}}{(q^4;q^4)_{n-r}(q^4;q^4)_{n+r+1}}. \label{nonmodular-S1}
\end{align}
Letting $u=q^2$ and $w=1$ and then replacing $q$ by $q^{1/4}$, we have
\begin{align}
&S_0(q^{1/2},1;q)=\sum_{m=0}^\infty q^{m^2} \sum_{n=0}^m \frac{q^{n^2+n}}{(q;q)_{m-n}} \Big(\frac{1}{(q;q)_n^2}+2\sum_{r=1}^n \frac{q^{r^2}}{(q;q)_{n-r}(q;q)_{n+r}} \Big), \label{add-S0-start} \\
&S_1(q^{1/2},1;q)=2q^{5/4}\sum_{m=0}^\infty q^{m^2+m} \sum_{n=0}^m \frac{q^{n^2+2n}}{(q;q)_{m-n}} \sum_{r=0}^n \frac{q^{r^2+r}}{(q;q)_{n-r}(q;q)_{n+r+1}}. \label{add-S1-start}
\end{align}

In order to calculate $S_0(q^{1/2},1;q)$ we define a Bailey pair $(\alpha_n^{(0)}(1;q),\beta_n^{(0)}(1;q))$ with
\begin{align}
\alpha_n^{(0)}(1;q):=\left\{\begin{array}{ll}
1 & n=0, \\
2q^{n^2} & n\geq 1.
\end{array}\right.
\end{align}
Using \eqref{eq-BP-lift} we obtain a Bailey pair $(\alpha_n^{(1)}(q;q),\beta_n^{(1)}(q;q)$ where
\begin{align}\label{S0-BP-1}
\alpha_n^{(1)}(q;q)=(2n+1)\frac{q^{n^2}(1-q^{2n+1})}{1-q}.
\end{align}
Applying \eqref{BP-S1} we obtain the Bailey pair
\begin{align}\label{S0-BP-2}
\alpha_n^{(2)}(q;q)=(2n+1)\frac{q^{2n^2+n}(1-q^{2n+1})}{1-q}, \quad \beta_n^{(2)}(q;q)=\sum_{k=0}^n \frac{q^{k^2+k}}{(q;q)_{n-k}}\beta_k^{(1)}(q;q).
\end{align}
Next, applying Lemma \ref{lem-BP-down} we obtain the Bailey pair
\begin{equation}\label{S0-BP-3}
\begin{split}
\alpha_0^{(3)}(1;q)&=1, ~~ \alpha_n^{(3)}(1;q)=(2n+1)q^{2n^2+n}-(2n-1)q^{2n^2-n} ~~ (n\geq 1), \\
\beta_n^{(3)}(1;q)&=\beta_n^{(2)}(1;q).
\end{split}
\end{equation}
Using \eqref{eq-BP-id-key} with the Bailey pairs $(\alpha_n^{(i)};\beta_n^{(i)})$ ($i=0,1,2,3$) in \eqref{add-S0-start}, we deduce that
\begin{align}
&S_0(q^{1/2},1;q)=\sum_{m=0}^\infty q^{m^2} \sum_{n=0}^m \frac{q^{n^2+n}}{(q;q)_{m-n}} \beta_m^{(0)}(1;q) \nonumber \\
&=\sum_{m=0}^\infty q^{m^2} \beta_m^{(2)}(q;q) =\sum_{m=0}^\infty q^{m^2} \beta_m^{(3)}(1;q) \nonumber \\
&=\frac{1}{(q;q)_\infty} \sum_{n=0}^\infty q^{n^2} \alpha_n^{(3)}(1;q) \nonumber \\
&=\frac{1}{(q;q)_\infty} \Big(1+\sum_{n=1}^\infty \big((2n+1)q^{3n^2+n}-(2n-1)q^{3n^2-n}\big)\Big) \nonumber \\
&=\frac{1}{(q;q)_\infty} \sum_{n=-\infty}^\infty (2n+1)q^{3n^2+n}. \label{nonmodular-S0-result}
\end{align}

In order to calculate $S_1(q^{1/2},1;q)$ we define a Bailey pair $(\widetilde{\alpha}_n^{(0)}(q;q),\widetilde{\beta}_n^{(0)}(q;q))$ with
\begin{align}
\widetilde{\alpha}_n^{(0)}(q;q)=q^{n^2+n}, \quad n\geq 0.
\end{align}
Using \eqref{eq-BP-lift} we obtain the Bailey pair
\begin{align}
\widetilde{\alpha}_n^{(1)}(q^2;q)=(n+1)\frac{(1-q^{2n+2})q^{n^2+n}}{1-q^2}, \quad \widetilde{\beta}_n^{(1)}(q^2;q)=\widetilde{\beta}_n^{(0)}(q;q).
\end{align}
Using  \eqref{BP-S1} we obtain the Bailey pair
\begin{equation}
\begin{split}
\widetilde{\alpha}_n^{(2)}(q^2;q)&=(n+1)\frac{q^{2n^2+3n}(1-q^{2n+2})}{1-q^2}, \\
 \widetilde{\beta}_n^{(2)}(q^2;q)&=\sum_{k=0}^n \frac{q^{k^2+2k}}{(q;q)_{n-k}}\widetilde{\beta}_k^{(1)}(q^2;q).
\end{split}
\end{equation}
Next, using Lemma \ref{lem-BP-down} we obtain the Bailey pair
\begin{align}
\widetilde{\alpha}_n^{(3)}(q;q)=(n+1)q^{2n^2+3n}-nq^{2n^2+n-1}, \quad \widetilde{\beta}_n^{(3)}(q;q)=\beta_n^{(2)}(q^2;q).
\end{align}
Using \eqref{eq-BP-id-key} with the Bailey pairs $(\widetilde{\alpha}_n^{(i)};\widetilde{\beta}_n^{(i)})$ ($i=0,1,2,3$) in \eqref{add-S1-start}, we deduce that
\begin{align}
&S_1(q^{1/2},1;q)=2\frac{q^{5/4}}{1-q} \sum_{m=0}^\infty q^{m^2+m} \sum_{n=0}^m \frac{q^{n^2+2n}}{(q;q)_{m-n}} \widetilde{\beta}_n^{(0)}(q;q) \nonumber \\
&=2\frac{q^{5/4}}{1-q} \sum_{m=0}^\infty q^{m^2+m} \widetilde{\beta}_m^{(2)}(q^2;q)=2\frac{q^{5/4}}{1-q} \sum_{m=0}^\infty q^{m^2+m}\widetilde{\beta}_m^{(3)}(q;q) \nonumber \\
&=2\frac{q^{5/4}}{(q;q)_\infty} \sum_{n=0}^\infty q^{n^2+n}\widetilde{\alpha}_n^{(3)}(q;q) \nonumber \\
&=2\frac{q^{5/4}}{(q;q)_\infty} \sum_{n=0}^\infty \Big( (n+1)q^{3n^2+4n}-nq^{3n^2+2n-1}\Big) \nonumber \\
&=-2\frac{q^{1/4}}{(q;q)_\infty} \sum_{n=-\infty}^\infty nq^{3n^2+2n}. \label{nonmodular-S1-result}
\end{align}
Substituting \eqref{nonmodular-S0-result} and \eqref{nonmodular-S1-result} with $q$ replaced by $q^4$ into \eqref{eq-proof-nonmodular-F}, we deduce that
\begin{align}
&F(q^2,q^2,1;q^4)=\frac{1}{(q^4;q^4)_\infty} \Big(\sum_{n=-\infty}^\infty (2n+1)q^{12n^2+4n}-\sum_{n=-\infty}^\infty (2n)q^{12n^2+8n+1}\Big) \nonumber \\
&=\frac{1}{(q^4;q^4)_\infty} \sum_{k=-\infty}^\infty (k+1)q^{3k^2+2k}. \label{nonmodular-F-final}
\end{align}
Recall the following identity from \cite[Eq.\ (2.34)]{Wang2024}:
\begin{align}
\sum_{k=-\infty}^\infty (k+1)q^{3k^2+2k}=\frac{1}{3}\frac{J_1^2J_4^2}{J_2}+\frac{2}{3}\frac{J_2^2J_3J_{12}}{J_1J_4J_6}. \label{Wang-id-1}
\end{align}
Substituting \eqref{Wang-id-1} into \eqref{nonmodular-F-final}, we obtain \eqref{nonmodular-id-1}.

We can prove \eqref{nonmodular-id-2} in a similar way, but here we prefer to use a different approach. Note that
\begin{align}
&F(q^2,q^{-2},1;q^4)-F(q^2,q^2,1;q^4)=\sum_{i,j,k\geq 0} \frac{q^{3i^2+3j^2+4k^2+2ij+4ik+4jk+2i-2j}(1-q^{4j})}{(q^4;q^4)_i(q^4;q^4)_j(q^4;q^4)_k} \nonumber \\
&=\sum_{i,j,k\geq 0} \frac{q^{3i^2+3(j+1)^2+4k^2+2i(j+1)+4ik+4(j+1)k+2i-2(j+1)}}{(q^4;q^4)_i(q^4;q^4)_j(q^4;q^4)_k} =qF(q^4,q^4,q^4;q^4). \label{nonmodular-relation}
\end{align}
Substituting \eqref{lift-VZ-1} and \eqref{nonmodular-id-1} into \eqref{nonmodular-relation}, we obtain \eqref{nonmodular-id-2}.
\end{proof}

\subsection{Dual Nahm sums}
Now we consider the dual examples of Table \ref{tab:32-triple}. We expect that $(A,B,C)$ are modular triples where (set $a=3/2$ in \eqref{A-exam1-lift-dual})
\begin{align}\label{A-exam1-lift-dual-32}
&A=\begin{pmatrix} 1 & 0 & -1/2 \\  0 & 1 & -1/2 \\ -1/2 & -1/2 & 1 \end{pmatrix},
\end{align}
and $B,C$ are given in Table \ref{tab-exam1-lift-dual}. Here the last two vectors $(\frac{1}{2},\frac{1}{2},-\frac{1}{2})^\mathrm{T}$ and $(\frac{1}{2},\frac{1}{2},0)^\mathrm{T}$ were found through a Maple search, and the corresponding values of $C$ were found after finding the identities \eqref{thm3.7-6} and \eqref{thm3.7-7}. These two modular triples lead us to the discovery of the last two choices of $(B,C)$ in Table \ref{tab:32-triple} by the dual operation.

\begin{table}[H]
    \centering
    \begin{tabular}{ccccccc}
    \hline
        $B$ &  $\begin{pmatrix} -c \\ c \\ 0 \end{pmatrix}$  & $\begin{pmatrix} 0 \\ 0 \\ -1/2 \end{pmatrix}$ &  $\begin{pmatrix} -1/4 \\ 1/4 \\ 1/2 \end{pmatrix}$ & $\begin{pmatrix} 1/4 \\ -1/4 \\ 1/2 \end{pmatrix}$  & $\begin{pmatrix} 0 \\ -1/2 \\ 0 \end{pmatrix}$  & $\begin{pmatrix} -1/2 \\ 0 \\ 0 \end{pmatrix}$ \\
        $C$ & $(8c^2-1)/12$   & $1/12$ & $1/24$ &  $1/24$ &  $1/{24}$ & $1/24$      \\
        \hline
        $B$   &  $\begin{pmatrix} 0 \\ -1/2 \\ 1/4 \end{pmatrix}$  &  $\begin{pmatrix} -1/2 \\ 0 \\ 1/4 \end{pmatrix}$   & $\begin{pmatrix} 0 \\ -1/2 \\ 1/2 \end{pmatrix}$  &  $\begin{pmatrix} -1/2 \\ 0 \\ 1/2 \end{pmatrix}$   &  $\begin{pmatrix} 1/2\\ 1/2 \\ -1/2 \end{pmatrix}$ & $\begin{pmatrix} 1/2\\ 1/2 \\ 0  \end{pmatrix}$  \\
        $C$  & $1/96$ & $1/96$  & $1/24$     & $1/24$   &  $1/12$  & $1/12$   \\
        \hline
    \end{tabular}
    \caption{Modular triples associated with the matrix $A$ in \eqref{A-exam1-lift-dual-32}.}
    \label{tab-exam1-lift-dual}
\end{table}

Since $i$ and $j$ are symmetric in the quadratic form generated by $A$, there are essentially eight different Nahm sums to be considered. Their modularity is proved by the following theorem.
\begin{theorem}\label{thm-2-Ex1-3/2-in}
We have
\begin{align}
   & \sum_{i,j,k\geq 0} \frac{q^{2i^2+2j^2+2k^2-2ik-2jk-2j+ak}}{(q^4;q^4)_i(q^4;q^4)_j(q^4;q^4)_k}=(-1,-q^a;q^2)_\infty, \label{thm3.7-1} \\
   &   \sum_{i,j,k\geq 0} \frac{q^{2i^2+2j^2+2k^2-2ik-2jk+i-j+2k}}{(q^4;q^4)_i(q^4;q^4)_j(q^4;q^4)_k}=\frac{J_2^2J_6^2}{J_1J_3J_4^2}, \label{thm3.7-3} \\
   &  \sum_{i,j,k\geq 0} \frac{q^{2i^2+2j^2+2k^2-2ik-2jk-ci+cj}}{(q^4;q^4)_i(q^4;q^4)_j(q^4;q^4)_k}=\frac{\overline{J}_{2+c,4}\overline{J}_{6+c,12}}{J_4^2}+q^2\frac{\overline{J}_{-c,4}\overline{J}_{c,12}}{J_4^2}, \label{thm3.7-4} \\
& \sum_{i,j,k\geq 0} \frac{q^{i^2+j^2+k^2-ik-jk-k}}{(q^2;q^2)_i(q^2;q^2)_j(q^2;q^2)_k}=6\frac{J_3^3}{J_1J_2^2}, \label{thm3.7-5}  \\
&\sum_{i,j,k\geq 0} \frac{q^{i^2+j^2+k^2-ik-jk+i+j-k}}{(q^2;q^2)_i(q^2;q^2)_j(q^2;q^2)_k}=2\frac{J_3^3}{J_1J_2^2}, \label{thm3.7-6} \\
&\sum_{i,j,k\geq 0} \frac{q^{i^2+j^2+k^2-ik-jk+i+j}}{(q^2;q^2)_i(q^2;q^2)_j(q^2;q^2)_k}=\frac{J_3^3}{J_1J_2^2}.\label{thm3.7-7}
\end{align}
\end{theorem}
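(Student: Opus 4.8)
The six identities in Theorem~\ref{thm-2-Ex1-3/2-in} all describe a single Nahm sum --- the one attached to the matrix $A$ of \eqref{A-exam1-lift-dual-32}, whose double $2A$ is the $A_3$ Cartan matrix --- evaluated at $q^2$ or $q^4$ and twisted by a linear form, so the plan is to treat them uniformly through one generating function, following the constant-term/integral template of the proofs of Theorems~\ref{2-Ex1-in} and~\ref{thm-3}. Concretely I would set
\[
F(u,v,w;q^2):=\sum_{i,j,k\geq 0}\frac{u^iv^jw^kq^{i^2+j^2+k^2-ik-jk}}{(q^2;q^2)_i(q^2;q^2)_j(q^2;q^2)_k},
\]
so that \eqref{thm3.7-5}, \eqref{thm3.7-6}, \eqref{thm3.7-7} are $F(1,1,q^{-1};q^2)$, $F(q,q,q^{-1};q^2)$, $F(q,q,1;q^2)$, while \eqref{thm3.7-1}, \eqref{thm3.7-3}, \eqref{thm3.7-4} are the values $F(1,q^{-2},q^a;q^4)$, $F(q,q^{-1},q^2;q^4)$, $F(q^{-c},q^c,1;q^4)$ of the analogous generating function with base $q^4$. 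The driving observation is that $i^2+j^2+k^2-ik-jk=\tfrac12(i-j)^2+\tfrac12(i+j)^2+k^2-(i+j)k$, i.e.\ $k$ couples to $i$ and $j$ only through the sum $i+j$; in particular, for the base-$q^4$ identities the matrix is precisely $\widetilde A^\star$ of \eqref{eq-vector-B-dual-exam1} at $m=\tfrac12$, so \eqref{thm3.7-3} and \eqref{thm3.7-4} are the specialisations $m=\tfrac12$ (and $\nu=-c$) of \eqref{thm1-id-3} and \eqref{thm1-id-1}, up to rewriting the product sides, whereas \eqref{thm3.7-1} is a genuinely new case because its twist $q^{-2j}$ is not of the symmetric form $u^{i-j}$.

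The common first move is the $k$-summation via Euler's second identity in \eqref{Euler1}, which for the base-$q^4$ generating function gives $\sum_{k\geq 0}w^kq^{2k^2-2(i+j)k}/(q^4;q^4)_k=(-wq^{2-2(i+j)};q^4)_\infty$ (and the obvious analogue $(-wq^{1-(i+j)};q^2)_\infty$ in the base-$q^2$ case). Next I would split the remaining double sum over $(i,j)$ according to the parity of $i+j$, writing $i+j=2N$, $i-j=2r$ in the even case and $i+j=2N+1$, $i-j=2r+1$ in the odd case; after completing the square in $i,j$ the inner sums over $r$ become $\beta_N$-type sums for Bailey pairs relative to $1$ and to $q^4$ (respectively $q^2$), which is exactly the situation handled in the proof of Theorem~\ref{2-Ex1-in}. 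For the symmetric values $F(u,u^{-1},w)$ --- that is, \eqref{thm3.7-3}, \eqref{thm3.7-4}, \eqref{thm3.7-5} --- I would then apply the consequences \eqref{Bailey's lemma-1} and \eqref{Bailey's lemma-2} of Bailey's lemma (or the general form \eqref{eq-Bailey-general-id} with $q\mapsto q^2$), specialise $w\in\{1,q^{-2},q^2,q^{-1}\}$, and convert the resulting bilateral sums to infinite products by the Jacobi triple product \eqref{JTP}. For the remaining values $F(1,q^{-2},q^a)$, $F(q,q,q^{-1})$, $F(q,q,1)$, whose right-hand sides $(-1,-q^a;q^2)_\infty$ and $c\,J_3^3/(J_1J_2^2)$ are essentially bare products, I expect the parity split to be followed by a direct collapse through the $q$-Gauss summation \eqref{Gauss} or the $q$-binomial theorem \eqref{q-binomial}, mirroring the very short proof of Theorem~\ref{thm-2}, so that no Bailey machinery is needed there.

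The linearisation itself is routine; the real work is the endgame. The theta quotient produced by the Bailey/triple-product step does not literally coincide with the right-hand sides displayed in \eqref{thm3.7-3} and \eqref{thm3.7-4} (for instance the $m=\tfrac12$ specialisation of \eqref{thm1-id-1} equals \eqref{thm3.7-4} only after rewriting), so one must repeatedly invoke the product rule $\overline{J}_{a,m}=J_m^2J_{2a,2m}/(J_{a,m}J_{2m})$ together with elementary index shifts in \eqref{JTP}; and --- as already happened in the proof of Theorem~\ref{thm-lift-11} --- it may be necessary to feed in a previously established identity to close the last gap (the $J_3$ on the right of \eqref{thm3.7-5}--\eqref{thm3.7-7} hints that a $3$-dissection, perhaps of the kind used in Section~\ref{sec-exam11}, is lurking). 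A secondary nuisance is bookkeeping: the parity split produces four summands that must be recombined so that each surviving bilateral series has exactly the symmetric shape demanded by \eqref{JTP}, and the powers of $q$ accumulated from completing squares differ in the even and odd cases, which is where sign and exponent slips are easiest to make.
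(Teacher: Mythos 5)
Your global strategy---one generating function, Euler summation to remove an index, a parity split, and then known single-sum identities---is the right family of ideas, and your observation that the base-$q^4$ cases sit inside Theorem \ref{2-Ex1-in} at $m=\tfrac12$ (with $\nu=-c$) is correct in principle: it would dispose of \eqref{thm3.7-4}, and of \eqref{thm3.7-3} and \eqref{thm3.7-5} as well, modulo nontrivial theta-product rewriting. Note, however, that the paper sums in the opposite order: since the quadratic form has no $ij$ cross term, it performs the $i$- and $j$-summations first, so that $F(u,v,w;q^4)$ collapses to a \emph{single} sum $\sum_k w^kq^{2k^2}(-uq^{2-2k};q^4)_\infty(-vq^{2-2k};q^4)_\infty/(q^4;q^4)_k$; the parity split is then in $k$, and no Bailey machinery is needed in the proof of Theorem \ref{thm-2-Ex1-3/2-in}. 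This is strictly more efficient than your $k$-first reduction, because it treats the non-symmetric twists ($v\neq u^{-1}$), to which the Bailey-pair construction of Theorem \ref{2-Ex1-in} does not apply, on the same footing as the symmetric ones.

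The genuine gap is in your endgame for \eqref{thm3.7-5}--\eqref{thm3.7-7}. For $F(q,q,q^{-1};q^2)$ and $F(q,q,1;q^2)$ the reduction leaves sums such as $\sum_n q^{2n^2}(-1;q^2)_n^2/(q^2;q^2)_{2n}$ and $\sum_n q^{2n^2+2n}(-q;q^2)_n^2/(q^2;q^2)_{2n+1}$ (this is exactly what the paper's $i,j$-first reduction produces at \eqref{add-6-F-start}); these are not in summable position for the $q$-Gauss formula \eqref{Gauss} or the $q$-binomial theorem \eqref{q-binomial}, so the ``very short proof of Theorem \ref{thm-2}'' does not transfer. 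The paper instead imports their evaluations from \cite[Corollary 2.2]{Wang2024}, proves the auxiliary identity \eqref{add-id-3} by a bespoke recursion $f_r=-f_{r+1}$ with $f_\infty=0$, combines the pieces via the Xia--Yao $3$-dissection identities \eqref{3core-id-1}--\eqref{3core-id-2} to reach $J_3^3/(J_1J_2^2)$, and finally obtains \eqref{thm3.7-7} not by any direct summation but from the shift relation $F(q^{-1},q,1;q^2)-F(q,q,1;q^2)=F(q,q,q^{-1};q^2)$ together with the already-proved cases. None of these ingredients is identified in your plan; your hedge that ``a previously established identity'' may be needed does not say which one or how it enters, so the proposal as written does not close these three cases.
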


\begin{proof}
We define
\begin{align}
F(u,v,w;q^4):=\sum_{i,j,k\geq 0} \frac{u^iv^jw^kq^{2i^2+2j^2+2k^2-2ik-2jk}}{(q^4;q^4)_i(q^4;q^4)_j(q^4;q^4)_k}.
\end{align}
By \eqref{Euler1} we have
\begin{align}\label{rank2-in3.30}
&F(u,v,w;q^4)=\sum_{k\geq 0} \frac{w^kq^{2k^2}}{(q^4;q^4)_k}(-uq^{2-2k};q^4)_{\infty}(-vq^{2-2k};q^4)_{\infty} \nonumber \\
&=(-uq^{2},-vq^{2};q^4)_{\infty}\sum_{i\geq 0} \frac{u^{i}v^iw^{2i}q^{4i^2}(-q^2/u,-q^2/v;q^4)_{i}}{(q^4;q^4)_{2i}} \nonumber \\
&\qquad +(-u,-v;q^4)_{\infty}\sum_{i\geq 0} \frac{u^iv^iw^{2i+1}q^{4i^2+4i+2}(-q^4/u,-q^4/v;q^4)_{i}}{(q^4;q^4)_{2i+1}}.
\end{align}

(1) By \eqref{rank2-in3.30} we have
\begin{align}
&F(1,q^{-2},q^a;q^4)=(-1;q^2)_{\infty}\sum_{i\geq 0} \frac{q^{4i^2-2i+2ai}(-q^2;q^2)_{2i}}{(q^4;q^4)_{2i}} \nonumber \\
& \qquad +(-q^{-2};q^2)_{\infty}\sum_{i\geq 0} \frac{q^{4i^2+2i+2ai+a+2}(-q^4;q^2)_{2i}}{(q^4;q^4)_{2i+1}}\nonumber \\
&=(-1;q^2)_{\infty}\Big(\sum_{i\geq 0} \frac{q^{4i^2-2i+2ai}}{(q^2;q^2)_{2i}}+\sum_{i\geq 0} \frac{q^{4i^2+2i+2ai+a}}{(q^2;q^2)_{2i+1}}\Big) \nonumber \\
&=(-1;q^2)_{\infty}\sum_{k\geq 0} \frac{q^{k^2-k+ak}}{(q^2;q^2)_{k}} =(-1,-q^a;q^2)_{\infty}.
\end{align}

(2) By \eqref{rank2-in3.30} we have
\begin{align}
&F(q,q^{-1},q^b;q^4)=(-q;q^2)_{\infty}\sum_{i\geq 0} \frac{q^{4i^2+2bi}(-q;q^2)_{2i}}{(q^4;q^4)_{2i}} \nonumber \\
&\qquad \qquad \qquad \qquad +(-q^{-1};q^2)_{\infty}\sum_{i\geq 0} \frac{q^{4i^2+4i+2bi+b+2}(-q^3;q^2)_{2i}}{(q^4;q^4)_{2i+1}}  \nonumber \\
=&(-q;q^2)_{\infty}\Big(\sum_{i\geq 0} \frac{q^{4i^2+2bi}(-q;q^2)_{2i}}{(q^4;q^4)_{2i}}+\sum_{i\geq 0} \frac{q^{4i^2+4i+2bi+b+1}(-q;q^2)_{2i+1}}{(q^4;q^4)_{2i+1}}\Big) \nonumber \\
=&(-q;q^2)_{\infty}\sum_{k\geq 0} \frac{q^{k^2+bk}(-q;q^2)_{k}}{(q^4;q^4)_{k}}. \label{thm3.7-proof-2}
\end{align}
Setting $b=2$ in \eqref{thm3.7-proof-2} and using \eqref{Entry 4.2.11} with $q$ replaced by $-q$, we obtain \eqref{thm3.7-3}.

(3) By \eqref{rank2-in3.30} we have
\begin{align}\label{proof-dual32-3}
F(q^{-c},q^{c},1;q^4)=(-q^{2+c},-q^{2-c};q^4)_{\infty}S_0(q)+(-q^{c},-q^{-c};q^4)_{\infty}S_1(q).
\end{align}
Here
\begin{align}
&S_0(q)=\sum_{i\geq 0} \frac{q^{4i^2}(-q^{2-c},-q^{2+c};q^4)_{i}}{(q^4;q^4)_{2i}}=\frac{(-q^{6+c},-q^{6-c},q^{12};q^{12})_{\infty}}{(q^4;q^4)_{\infty}} ~~ \text{(by \eqref{Entry 5.3.1})}, \label{proof-32-3-S0}\\
&S_1(q)=\sum_{i\geq 0} \frac{q^{4i^2+4i+2}(-q^{4-c},-q^{4+c};q^4)_{i}}{(q^4;q^4)_{2i+1}} \nonumber \\
&= \frac{q^2}{1+q^c}\sum_{i\geq 0} \frac{q^{4i^2+4i}(-q^{4-c};q^4)_{i}(-q^{c};q^4)_{i+1}}{(q^4;q^4)_{2i+1}} \nonumber \\
&=\frac{q^2}{1+q^c}\lim\limits_{\rho_{1},\rho_{2}\to \infty}\sum_{i\geq 0} \frac{(-q^{4-c},\rho_{1},\rho_{2};q^4)_{i}(-q^{c};q^4)_{i+1}}{(q^4;q^4)_{2i+1}}\left(\frac{q^8}{\rho_{1}\rho_{2}}\right)^i \nonumber \\
&=\frac{q^2}{1+q^c}\times \frac{1}{(q^4;q^4)_{\infty}}\sum_{i\geq 0} (q^{c(i+1)}+q^{-ci})q^{6i^2+6i}
\quad\text{(by \eqref{Part2-5.2.4})}\nonumber \\
&=\frac{q^2}{1+q^c}\times \frac{1}{(q^4;q^4)_{\infty}}\sum_{i=-\infty}^{\infty} q^{6i^2+(6-c)i} \nonumber \\
&=\frac{q^2}{1+q^c}\times \frac{(-q^{c},-q^{12-c},q^{12};q^{12})_{\infty}}{(q^4;q^4)_{\infty}}. \label{proof-32-3-S1}
\end{align}
Substituting \eqref{proof-32-3-S0} and \eqref{proof-32-3-S1} into \eqref{proof-dual32-3}, we obtain \eqref{thm3.7-4}.

(4) By \eqref{rank2-in3.30} we have
\begin{align}
&F(1,1,q^{-1};q^2)=(-q,-q;q^2)_{\infty}\sum_{i\geq 0} \frac{q^{2i^2-2i}(-q,-q;q^2)_{i}}{(q^2;q^2)_{2i}}  \nonumber \\
&\qquad +(-1,-1;q^2)_{\infty}\sum_{i\geq 0} \frac{q^{2i^2}(-q^{2},-q^{2};q^2)_{i}}{(q^2;q^2)_{2i+1}} \nonumber \\
&=\frac{J_2^4}{J_1^2J_4^2}\sum_{i\geq 0} \frac{q^{2i^2-2i}(-q;q^2)_{i}}{(q;q^2)_{i}(q^4;q^4)_i} +4\frac{J_4^3J_6^2}{J_2^4J_{12}}.\label{add-id-1}
\end{align}
Here we used  \eqref{Entry 4.2.8+4.2.9} with $q$ replaced by $q^2$.

From \eqref{proof-32-3-S1} with $c=2$ we deduce that
\begin{align}
    \sum_{i\geq 0} \frac{q^{2i^2+2i}(-q;q^2)_{i}}{(q;q^2)_{i+1}(q^4;q^4)_{i}} =\frac{\overline{J}_{1,6}}{J_2}. \label{add-id-2}
\end{align}
We now show that the first sum in \eqref{add-id-1} is equal to twice the sum in \eqref{add-id-2}.

For each integer $r\geq 0$ we define
\begin{align}\label{PrEntry4.2.9}
f_{r}(q^2):=\sum_{i\geq 0} \frac{q^{2i^2-2i}(-q;q^2)_{i+r}}{(q;q^2)_{i+r}(q^4;q^4)_{i}}
-2\sum_{i\geq 0} \frac{q^{2i^2+2i}(-q;q^2)_{i+r}}{(q;q^2)_{i+r+1}(q^4;q^4)_{i}}.
\end{align}
Hence,
\begin{align}
&f_{r}(q^2)=\sum_{i\geq 0} \frac{q^{2i^2-2i}(-q;q^2)_{i+r}}{(q;q^2)_{i+r+1}(q^4;q^4)_{i}}
((1-q^{2(i+r)+1})-2q^{4i}) \nonumber \\
&=\sum_{i\geq 0} \frac{q^{2i^2-2i}(-q;q^2)_{i+r}}{(q;q^2)_{i+r+1}(q^4;q^4)_{i}}
(2(1-q^{4i})-(1+q^{2(i+r)+1})) \nonumber \\
&=2\sum_{i\geq 1} \frac{q^{2i^2-2i}(-q;q^2)_{i+r}}{(q;q^2)_{i+r+1}(q^4;q^4)_{i-1}}
-\sum_{i\geq 0} \frac{q^{2i^2-2i}(-q;q^2)_{i+r+1}}{(q;q^2)_{i+r+1}(q^4;q^4)_{i}}  \nonumber \\
&=2\sum_{i\geq 0} \frac{q^{2i^2+2i}(-q;q^2)_{i+r+1}}{(q;q^2)_{i+r+2}(q^4;q^4)_{i}}
-\sum_{i\geq 0} \frac{q^{2i^2-2i}(-q;q^2)_{i+r+1}}{(q;q^2)_{i+r+1}(q^4;q^4)_{i}}   \nonumber\\
&=-f_{r+1}(q^2).
\end{align}
Note that
\begin{align}
&\lim\limits_{r \to \infty}f_{r}(q^2)=\frac{(-q;q^2)_{\infty}}{(q;q^2)_{\infty}}\Big(\sum_{i\geq 0} \frac{q^{2i^2-2i}}{(q^4;q^4)_{i}}
-2\sum_{i\geq 0} \frac{q^{2i^2+2i}}{(q^4;q^4)_{i}}\Big)\nonumber\\
&=\frac{(-q;q^2)_{\infty}}{(q;q^2)_{\infty}}\left((-1;q^4)_{\infty}-2(-q^4;q^4)_{\infty} \right)=0.
\end{align}
Therefore, the recurrence formula above implies that $f_{r}(q^2)=0$.
Letting $r=0$ in \eqref{PrEntry4.2.9}, we prove the previous assertion that
\begin{align}
 \sum_{i\geq 0} \frac{q^{2i^2-2i}(-q;q^2)_{i}}{(q;q^2)_i(q^4;q^4)_{i}} =2\frac{\overline{J}_{1,6}}{J_2}. \label{add-id-3}
\end{align}
Substituting \eqref{add-id-3} into \eqref{add-id-1}, we deduce that
\begin{align}
    &F(1,1,q^{-1};q^2)=2\frac{J_2^5J_3J_{12}}{J_1^3J_4^3J_6}+4\frac{J_4^3J_6^2}{J_2^4J_{12}} \nonumber \\
    &=2\frac{J_2^5J_{12}}{J_4^3J_6}\left( \frac{J_4^6J_6^3}{J_2^9J_{12}^2}+3q\frac{J_4^2J_6J_{12}^2}{J_2^7} \right)+4\frac{J_4^3J_6^2}{J_2^4J_{12}} =6\frac{J_3^3}{J_1J_2^2}. \label{add-id-4}
\end{align}
Here for the last two equalities we used the following identities \cite[Eqs.\ (3.75) and (3.38)]{XiaYao}:
\begin{align}
    \frac{J_3^3}{J_1}&=\frac{J_4^3J_6^2}{J_2^2J_{12}}+q\frac{J_{12}^3}{J_4}, \label{3core-id-1} \\
    \frac{J_3}{J_1^3}&=\frac{J_4^6J_6^3}{J_2^9J_{12}^2}+3q\frac{J_4^2J_6J_{12}^2}{J_2^7}. \label{3core-id-2}
\end{align}
This proves \eqref{thm3.7-5}.

(5) Recall the following identities from \cite[Corollary 2.2]{Wang2024}:
\begin{align}
\sum_{n=0}^\infty \frac{q^{n^2}(-1;q)_n^2}{(q;q)_{2n}}&=\frac{4}{3}\frac{J_2J_3^2}{J_1^2J_6}-\frac{1}{3}\frac{J_1^4}{J_2^2}, \label{key-id-1} \\
\sum_{n=0}^\infty \frac{q^{2n^2+2n}(-q;q^2)_n^2}{(q^2;q^2)_{2n+1}}
&=\frac{1}{3}\frac{J_1^2J_4^2}{J_2^2}+\frac{2}{3}\frac{J_2J_3J_{12}}{J_1J_4J_6}. \label{key-id-2}
\end{align}

We have
\begin{align}\label{add-6-F-start}
&F(q,q,q^{-1};q^2)=(-q^2;q^2)_\infty^2 \sum_{n=0}^\infty \frac{q^{2n^2}(-1;q^2)_n^2}{(q^2;q^2)_{2n}}+(-q;q^2)_\infty^2 \sum_{n=0}^\infty \frac{q^{2n^2+2n}(-q;q^2)_n^2}{(q^2;q^2)_{2n+1}} \nonumber \\
&=\frac{J_4^2}{J_2^2}\Big(\frac{4}{3}\frac{J_4J_6^2}{J_2^2J_{12}}-\frac{1}{3}\frac{J_2^4}{J_4^2}\Big)+\frac{J_2^4}{J_1^2J_4^2}\Big( \frac{1}{3}\frac{J_1^2J_4^2}{J_2^2}+\frac{2}{3}\frac{J_2J_3J_{12}}{J_1J_4J_6}\Big) \nonumber \\
&=\frac{4}{3}\frac{J_4^3J_6^2}{J_2^4J_{12}}+\frac{2}{3}\frac{J_2^5J_3J_{12}}{J_1^3J_4^3J_6}=2\frac{J_3^3}{J_1J_2^2}.
\end{align}
Here for the last equality we used \eqref{add-id-4}. This proves \eqref{thm3.7-6}.

(6) Using \eqref{thm3.7-4} with $c=2$ and \eqref{add-id-4} we obtain
\begin{align}
    F(q^{-1},q,1;q^2)=2\frac{J_4^3J_6^2}{J_2^4J_{12}}+\frac{J_2^5J_3J_{12}}{J_1^3J_4^3J_6}=3\frac{J_3^3}{J_1J_2^2}. \label{add-id-5}
\end{align}
It is easy to see that
\begin{align}
    &F(q^{-1},q,1;q^2)-F(q,q,1;q^2)=\sum_{i,j,k\geq 0} \frac{q^{i^2+j^2+k^2-ik-jk-i+j}(1-q^{2i})}{(q^2;q^2)_i(q^2;q^2)_j(q^2;q^2)_k} \nonumber \\
 &=\sum_{i,j,k\geq 0} \frac{q^{(i+1)^2+j^2+k^2-(i+j+1)k+j-i-1}}{(q^2;q^2)_i(q^2;q^2)_j(q^2;q^2)_k} \nonumber \\
 &=\sum_{i,j,k\geq 0} \frac{q^{i^2+j^2+k^2-ik-jk+i+j-k}}{(q^2;q^2)_i(q^2;q^2)_j(q^2;q^2)_k}=F(q,q,q^{-1};q^2). \label{thm3.7-proof-5}
\end{align}
Substituting \eqref{add-6-F-start} and \eqref{add-id-5} into \eqref{thm3.7-proof-5}, we obtain \eqref{thm3.7-7}.
\end{proof}
\begin{rem}
If we set $b=0$ in \eqref{thm3.7-proof-2} and using \eqref{S. 25}, we deduce that
\begin{align}
F(q,q^{-1},1;q^4)=(-q;q^2)_{\infty}\sum_{k\geq 0} \frac{q^{k^2}(-q;q^2)_{k}}{(q^4;q^4)_{k}}
=\frac{J_{2}^{3}J_{3,6}}{J_{1}^{2}J_{4}^{2}}.
\end{align}
This proves the special case $c=-1$ of \eqref{thm3.7-4}.
\end{rem}

\subsection{Concluding remarks}
A striking byproduct of our study is two new counterexmaples to Zagier's duality conjecture. Recall that Wang \cite{Wang2024} provided the first set of counterexamples to it. He showed that the Nahm sums $f_{A,B_i,1/16}(q)$ ($i=1,2$) are modular for
\begin{align}\label{exam-data-1}
A=\begin{pmatrix} 1 & 0 & 0 & -1/2 \\
0 & 1 & 0 & -1/2 \\
0 & 0 & 1 & -1/2 \\
-1/2 & -1/2 & -1/2 & 1 \end{pmatrix},  \quad B_1=\begin{pmatrix} 0 \\ 1/2 \\ 1/2 \\ -1/2 \end{pmatrix}, \quad B_2=\begin{pmatrix} 0 \\ 1/2 \\ 1/2 \\ 0\end{pmatrix}.
\end{align}
However, the dual Nahm sums $f_{A^\star,B_i^\star,C'}(q)$ ($i=1,2$) are not modular for any $C'$ where
\begin{align}\label{exam-data-2}
A^\star=\begin{pmatrix} 2 & 1 & 1 & 2 \\
1 & 2 & 1 & 2 \\
1 & 1 & 2 & 2 \\
2 & 2 & 2 & 4
\end{pmatrix}, \quad  B_1^\star=\begin{pmatrix}
0 \\ 1/2 \\ 1/2 \\ 0 \end{pmatrix}, \quad B_2^\star=\begin{pmatrix} 1 \\ 3/2 \\ 3/2 \\ 2\end{pmatrix}.
\end{align}
The reason of being nonmodular is that the dual Nahm sums can be expressed as sums of two modular forms of weights zero and one, respectively.

Motivated by and similar to Wang's discovery, during the above study of the lift of Zagier's Example 1, we considered the following matrices:
\begin{align}
A=\begin{pmatrix} 3/2 & 1/2 & 1 \\  1/2 & 3/2 & 1 \\ 1 & 1 & 2 \end{pmatrix}, \quad A^\star=\begin{pmatrix} 1 & 0 & -1/2 \\  0 & 1 & -1/2 \\ -1/2 & -1/2 & 1 \end{pmatrix}.
\end{align}
We proved that $f_{A,B_i,C}(q)$ are nonmodular for any $C$  where $B_1=(\frac{1}{2},\frac{1}{2},0)^\mathrm{T}$, $B_2=(1,1,1)^\mathrm{T}$, but  their dual Nahm sums $f_{A^\star,B_i^\star,C_i}(q)$ are modular.  Hence they serve as new counterexamples to Zagier's duality conjecture.

\end{document}